\documentclass[reqno]{amsart}
\usepackage{amsmath, amssymb, amsthm, epsfig}
\usepackage{hyperref, latexsym}
\usepackage{url}
\usepackage[mathscr]{euscript}

\usepackage{color}
\usepackage{fullpage} 
\usepackage{setspace}

\onehalfspacing

\def\today{\ifcase\month\or
  January\or February\or March\or April\or May\or June\or
  July\or August\or September\or October\or November\or December\fi
  \space\number\day, \number\year}

 \newtheorem{theorem}{Theorem}
  
 \newtheorem{lemma}[theorem]{Lemma}

 \theoremstyle{definition}

 \theoremstyle{remark}

 \newcommand{\C}{\mathbb{C}}
 \newcommand{\R}{\mathbb{R}}

 \newcommand{\Z}{\mathbb{Z}}

 \newcommand{\hh}{\tfrac12}
 \newcommand{\ds}{\text{\rm d}s}

  \renewcommand{\d}{\text{\rm d}}
 \newcommand{\du}{\text{\rm d}u}

 \newcommand{\dw}{\text{\rm d}w}
 \newcommand{\dx}{\text{\rm d}x}

\newcommand{\im}{{\rm Im}\,}
\newcommand{\re}{{\rm Re}\,}

\newcommand{\dd}{\,{\rm d}}
\newcommand{\meio}{\frac{1}{2}}
\newcommand{\logfeio}{\log\log C(t,\pi)^{3/d}}

\begin{document}
\title[Bounding $S_n(t)$ on RH]{Bounding $S_n(t)$ on the Riemann hypothesis}
\author[Carneiro and Chirre]{Emanuel Carneiro and Andr\'{e}s Chirre}
\subjclass[2010]{11M06, 11M26, 41A30}
\keywords{Riemann zeta-function, Riemann hypothesis, argument, Beurling-Selberg extremal problem, extremal functions, Gaussian subordination, exponential type.}
\address{IMPA - Instituto Nacional de Matem\'{a}tica Pura e Aplicada - Estrada Dona Castorina, 110, Rio de Janeiro, RJ, Brazil 22460-320}
\email{carneiro@impa.br}
\email{achirre@impa.br}

\allowdisplaybreaks
\numberwithin{equation}{section}

\maketitle

\begin{abstract}
Let $S(t) = \tfrac{1}{\pi} \arg \zeta \big(\hh + it \big)$ be the argument of the Riemann zeta-function at the point $\tfrac12 + it$. For $n \geq 1$ and $t>0$ define its iterates  
\begin{equation*}
S_n(t) = \int_0^t S_{n-1}(\tau) \,\d\tau\, + \delta_n\,, 
\end{equation*}
where $\delta_n$ is a specific constant depending on $n$ and $S_0(t) := S(t)$. In 1924, J. E. Littlewood proved, under the Riemann hypothesis (RH), that $S_n(t) = O(\log t/ (\log \log t)^{n+1})$. The order of magnitude of this estimate was never improved up to this date. The best bounds for $S(t)$ and $S_1(t)$ are currently due to Carneiro, Chandee and Milinovich. In this paper we establish, under RH, an explicit form of this estimate 
\begin{equation*}
-\left( C^-_n + o(1)\right) \frac{\log t}{(\log \log t)^{n+1}} \ \leq \ S_n(t) \ \leq \ \left( C^+_n + o(1)\right) \frac{\log t}{(\log \log t)^{n+1}}\,,
\end{equation*}
for all $n\geq 2$, with the constants $C_n^{\pm}$ decaying exponentially fast as $n \to \infty$. This improves   (for all $n \geq 2$) a result of Wakasa, who had previously obtained such bounds with constants tending to a stationary value when $n \to \infty$. Our method uses special extremal functions of exponential type derived from the Gaussian subordination framework of Carneiro, Littmann and Vaaler for the cases when $n$ is odd, and an optimized interpolation argument for the cases when $n$ is even. In the final section we extend these results to a general class of $L$-functions.

\end{abstract}

\section{Introduction}

This work is inserted in the fields of analytic number theory, harmonic analysis and approximation theory. Our main goal here is to improve, under the Riemann hypothesis, the known upper and lower bounds for the moments $\{S_n(t)\}_{n\geq2}$ of the argument of the Riemann zeta-function on the critical line, extending the work of Carneiro, Chandee and Milinovich \cite{CCM} for $S(t)$ and $S_1(t)$. Our argument relies on the use of certain extremal majorants and minorants of exponential type derived from the Gaussian subordination framework of Carneiro, Littmann and Vaaler \cite{CLV}. 

\medskip

Let us start by recalling the main objects of our study and some of the previous works on the topic.

\subsection{Background} Let $\zeta(s)$ denote the Riemann zeta-function. If $t$ is not the ordinate of a zero of $\zeta(s)$ we define
$$S(t) = \tfrac{1}{\pi} \arg \zeta \big(\hh + it \big),$$
where the argument is obtained by a continuous variation along straight line segments joining the points $2$, $2+i t$ and $\hh + it$, with the convention that $\arg \zeta(2) = 0$. If $t$ is the ordinate of a zero of $\zeta(s)$ we define
$$S(t) = \tfrac{1}{2}\, \lim_{\varepsilon \to 0} \left\{ S(t + \varepsilon) + S(t - \varepsilon)\right\}.$$
The function $S(t)$ has an intrinsic oscillating character and is naturally connected to the distribution of the non-trivial zeros of $\zeta(s)$ via the relation
$$N(t) = \frac{t}{2\pi} \log \frac{t}{2\pi} - \frac{t}{2\pi} + \frac{7}{8}  + S(t) + O\left( \frac{1}{t}\right),$$
where $N(t)$ counts (with multiplicity) the number of zeros $\rho = \beta + i \gamma$ of $\zeta(s)$ such that $0 < \gamma \leq t$ (zeros with ordinate $\gamma = t$ are counted with weight $\hh$). 

\medskip

Useful information on the qualitative and quantitative behavior of $S(t)$ is encoded in its moments $S_n(t)$. Setting $S_0(t) = S(t)$ we define, for $n\geq 1$ and $t >0$,
\begin{equation}\label{Intro_eq1_int_S_n}
S_n(t) = \int_0^t S_{n-1}(\tau) \,\d\tau\, + \delta_n\,,
\end{equation}
where $\delta_n$ is a specific constant depending on $n$. These are given by (see for instance \cite[p.\,2]{F1})
$$\delta_{2k-1} =\frac{ (-1)^{k-1}}{\pi} \int_{\tfrac{1}{2}}^{\infty} \int_{\sigma_{2k-2}}^{\infty} \ldots \int_{\sigma_{2}}^{\infty} \int_{\sigma_{1}}^{\infty} \log |\zeta(\sigma_0)|\, \d\sigma_0\,\d\sigma_1\,\ldots \,\d \sigma_{2k-2} $$
for $n = 2k-1$, with $k\geq 1$, and
$$\delta_{2k} = (-1)^{k-1} \int_{\tfrac{1}{2}}^{1} \int_{\sigma_{2k-1}}^{1} \ldots \int_{\sigma_{2}}^{1} \int_{\sigma_{1}}^{1} \d\sigma_0\,\d\sigma_1\,\ldots \,\d \sigma_{2k-1}  = \frac{(-1)^{k-1}}{(2k)! \cdot 2^{2k}}$$
for $n = 2k$, with $k \geq 1$.

\medskip

A classical result of Littlewood \cite[Theorem 11]{L} states that, under the Riemann hypothesis (RH),
\begin{equation}\label{Littlewood_bound}
S_n(t) = O \left( \frac{\log t}{(\log \log t)^{n+1}}\right)
\end{equation}
for $n \geq 0$. The order of magnitude of \eqref{Littlewood_bound} has not been improved over the last ninety years, and the efforts have hence been concentrated in optimizing the values of the implicit constants. In the case $n=0$, the best bound under RH is due to Carneiro, Chandee and Milinovich \cite{CCM} (see also \cite{CCM2}), who established that
\begin{equation}\label{Intro_S_t_bound}
|S(t)| \leq \left( \frac{1}{4} + o(1) \right) \frac{\log t}{\log \log t}.
\end{equation}
This improved upon earlier works of Goldston and Gonek \cite{GG}, Fujii \cite{F2} and Ramachandra and Sankaranarayanan \cite{RS}, who had obtained \eqref{Intro_S_t_bound} with constants $C =1/2$, $C=0.67$ and $C=1.12$, respectively, replacing the constant $C= 1/4$.

\medskip

For $n=1$ the current best bound under RH is also due to Carneiro, Chandee and Milinovich \cite{CCM}, who showed that 
\begin{equation}\label{Intro_S_1_t_bound}
-\left( \frac{\pi}{24} + o(1) \right) \frac{\log t}{(\log \log t)^{2}} \ \leq \ S_1(t)\  \leq \ \left( \frac{\pi}{48} + o(1) \right) \frac{\log t}{(\log \log t)^{2}}.
\end{equation}
This improved upon earlier works of Fujii \cite{F3}, and Karatsuba and Korol\"{e}v \cite{KK}, who had obtained \eqref{Intro_S_1_t_bound} with the pairs of constants $(C^+, C^-) = (0.32,0.51)$ and $(C^+, C^-) = (40,40)$, respectively, replacing the pair $(C^+, C^-) = (\pi/48, \pi/24)$.

\medskip

For $n\geq 2$, under RH, it was recently established by Wakasa \cite{W} that   
\begin{equation}\label{Intro_S_n_t_bound}
|S_n(t)| \leq \left( W_n + o(1)\right) \frac{\log t}{(\log \log t)^{n+1}},
\end{equation}
with the constant $W_n$ given by
\begin{align*}
\begin{split}
W_n & = \frac{1}{2\pi n!} \left\{ \frac{1}{1 - \tfrac{1}{e}\left(1 + \tfrac{1}{e}\right)} \sum_{j=0}^n \frac{n!}{(n-j)!} \left( \frac{1}{e} + \frac{1}{2^{j+1} e^2}\right) \right. \\
& \ \ \ \ \ \ \ \ \ \ \ \ \ \ \ \ \ \ \ \ \ \ \ \ \ \ \ \  \ \ \ \ \ \ \   \left.+ \frac{1}{(n+1)} \cdot \frac{\frac{1}{e} \left( 1+ \frac{1}{e}\right)}{1 - \frac{1}{e} \left(1 + \frac{1}{e}\right)} + \frac{1}{n(n+1)}\cdot \frac{1}{1 - \frac{1}{e}\left(1 + \frac{1}{e}\right)} \right\}
\end{split}
\end{align*}
if $n$ is odd, and 
\begin{align*}
\begin{split}
W_n & = \frac{1}{2\pi n!} \left\{ \frac{1}{1 - \tfrac{1}{e}\left(1 + \tfrac{1}{e}\right)} \sum_{j=0}^n \frac{n!}{(n-j)!} \left( \frac{1}{e} + \frac{1}{2^{j+1} e^2}\right) \right.\\
& \ \ \ \ \ \ \ \ \ \ \ \ \ \ \ \ \ \ \ \ \ \ \ \ \ \ \ \  \ \ \ \ \ \ \  \left. + \frac{1}{(n+1)} \cdot \frac{\frac{1}{e} \left( 1+ \frac{1}{e}\right)}{1 - \frac{1}{e} \left(1 + \frac{1}{e}\right)} + \frac{\pi}{2} \cdot \frac{1}{1 - \frac{1}{e}\left(1 + \frac{1}{e}\right)} \right\}
\end{split}
\end{align*}
if $n$ is even. 

\medskip

Unconditionally, it is known that $S(t) = O(\log t)$, $S_1(t) = O(\log t)$ and $S_n(t) = O\big(t^{n-1}/\log t\big)$ for $n\geq 2$ (see, for instance, \cite{F1} for the latter). In fact, the Riemann hypothesis is equivalent to the statement that $S_n(t) = o(t^{n-2})$ as $t \to \infty$, for any $n \geq 3$ (see \cite[Theorem 4]{F1}).

\subsection{Main result} Here we extend the methods of \cite{CCM} to significantly improve the bound \eqref{Intro_S_n_t_bound}. Our main result is the following.
\begin{theorem}\label{Thm1}
Assume the Riemann hypothesis. For $n \geq 0$ and $t$ sufficiently large we have
\begin{equation}\label{Intro_Thm_1_eq}
-\left( C^-_n + o(1)\right) \frac{\log t}{(\log \log t)^{n+1}} \ \leq \ S_n(t) \ \leq \ \left( C^+_n + o(1)\right) \frac{\log t}{(\log \log t)^{n+1}}\,,
\end{equation}
where $C_n^{\pm}$ are positive constants given by:
\begin{itemize}
\item For $n=0$,
$$C_0^{\pm} = \frac{1}{4}.$$

\smallskip

\item For $n = 4k +1$, with $k \in \Z^+$,
$$C_{n}^- = \frac{\zeta(n+1)}{\pi \cdot 2^{n+1}} \ \ \ \ {\rm and} \ \ \ \ C_{n}^+ = \frac{\left( 1 - 2^{-n}\right)\zeta(n+1)}{\pi \cdot 2^{n+1}}.$$

\smallskip

\item For $n = 4k +3$, with $k \in \Z^+$,
$$C_{n}^- = \frac{\left( 1 - 2^{-n}\right)\zeta(n+1)}{\pi \cdot 2^{n+1}} \ \ \ \ {\rm and} \ \ \ \ C_{n}^+ = \frac{\zeta(n+1)}{\pi \cdot 2^{n+1}}.$$

\smallskip

\item For $n \geq 2$ even,
\begin{align}\label{Bound_Thm1_n_even}
\begin{split}
C_n^+ = C_n^- & =  \left[\frac{2 \big(C_{n+1}^{+} + C_{n+1}^{-}\big) \ C_{n-1}^{+}\ C_{n-1}^{-}}{C_{n-1}^{+} + C_{n-1}^{-}}\right]^{1/2} \\
& = \frac{\sqrt{2}}{\pi \cdot 2^{n+1}}\left[\frac{\left(1 - 2^{-n-2}\right)\,\left( 1 - 2^{-n+1}\right)\,\zeta(n) \ \zeta(n+2)}{\left(1 - 2^{-n}\right)}\right]^{1/2}.
\end{split}
\end{align}
\end{itemize}
The terms $o(1)$ in \eqref{Intro_Thm_1_eq} are $O(\log \log \log t / \log \log t)$.\footnote{We remark that the implicit constants in the $O-$notation in our estimates (as well as in \eqref{Littlewood_bound}) are allowed to depend on $n$.}  
\end{theorem}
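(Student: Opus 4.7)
The strategy mirrors that of \cite{CCM} for $n=0$ and $n=1$, and splits according to the parity of $n$; the $n=0$ case is already established in \cite{CCM}.

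\emph{Odd $n$.} I would start from an explicit formula expressing $S_n(t)$ as a conditionally-convergent sum over the non-trivial zeros $\rho=\hh+i\gamma$ of $\zeta(s)$,
\begin{equation*}
\pi\,S_n(t) \;=\; (-1)^{\frac{n-1}{2}}\sum_{\gamma} f_n(t-\gamma) \;+\; O_{n}(1),
\end{equation*}
with $f_n:\R\to\R$ the explicit function obtained by integrating the standard identity for $\pi S(t)$ a total of $n$ times and absorbing the constants $\delta_n$ of \eqref{Intro_eq1_int_S_n}. This $f_n$ has definite parity, decays like $|x|^{-2}$ at infinity, and is (up to a polynomial of degree less than $n$) an $n$-fold antiderivative of $\arctan(1/x)$. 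I would then invoke the Gaussian-subordination extremal-function machinery of \cite{CLV} to produce majorants $M_\Delta\geq f_n$ and minorants $L_\Delta\leq f_n$ of exponential type $2\pi\Delta$, with sharp closed-form values for $\|M_\Delta-f_n\|_{L^1(\R)}$ and $\|f_n-L_\Delta\|_{L^1(\R)}$. Applying the Guinand--Weil explicit formula to $M_\Delta$ and $L_\Delta$ converts $\sum_\gamma M_\Delta(t-\gamma)$ into a main term proportional to $(\log t)\int_\R M_\Delta\,\dx$ plus a prime-power sum controlled by the Fourier transform of $M_\Delta-f_n$ supported on $[-\Delta,\Delta]$. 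Choosing $2\pi\Delta=\log t/\log\log t$ renders the prime sum of lower order, and the leading contribution reduces to $(C_n^{\pm}+o(1))\log t/(\log\log t)^{n+1}$. The dichotomy $n\equiv 1,3\pmod 4$ encodes the parity of $f_n$, which determines on which side (majorant vs.\ minorant) the CLV construction is sharper; the closed-form $L^1$-errors evaluate in the two cases to $\zeta(n+1)/(\pi\cdot 2^{n+1})$ and $(1-2^{-n})\zeta(n+1)/(\pi\cdot 2^{n+1})$ via standard geometric-series manipulations (the $(1-2^{-n})$ arising from an odd-integer restriction in one of the two subordination series).

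\emph{Even $n\geq 2$.} Here I would bootstrap from the odd-case bounds using the exact identity
\begin{equation*}
S_{n+1}(t+b) - S_{n+1}(t-a) \;=\; \int_{-a}^{b} S_{n}(t+u)\,\du \qquad (a,b>0).
\end{equation*}
Setting $L_n=(1+o(1))\log t/(\log\log t)^{n}$, the odd-case bound $-C_{n-1}^{-}L_n\leq S_{n-1}(\tau)\leq C_{n-1}^{+}L_n$ gives, by integration, $S_n(t+u)\geq S_n(t)-C_{n-1}^{-}L_n\,u$ for $u\geq 0$ and $S_n(t+u)\geq S_n(t)-C_{n-1}^{+}L_n\,|u|$ for $u\leq 0$. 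Integrating in $u$ on $[-a,b]$ yields
\begin{equation*}
(a+b)\,S_n(t) \;\leq\; \bigl[S_{n+1}(t+b)-S_{n+1}(t-a)\bigr] \;+\; \tfrac{L_n}{2}\bigl(C_{n-1}^{+}a^{2}+C_{n-1}^{-}b^{2}\bigr).
\end{equation*}
Using the odd-case bound $S_{n+1}(t+b)-S_{n+1}(t-a)\leq(C_{n+1}^{+}+C_{n+1}^{-}+o(1))\log t/(\log\log t)^{n+2}$ and optimizing the right-hand side: writing $r=a/(a+b)\in(0,1)$, the minimum of $C_{n-1}^{+}r^{2}+C_{n-1}^{-}(1-r)^{2}$ occurs at $r^{\ast}=C_{n-1}^{-}/(C_{n-1}^{+}+C_{n-1}^{-})$ with value $C_{n-1}^{+}C_{n-1}^{-}/(C_{n-1}^{+}+C_{n-1}^{-})$; optimizing in the total length $a+b$ then balances the two terms and produces exactly \eqref{Bound_Thm1_n_even}. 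Applying the symmetric inequalities supplies the matching lower bound on $S_n(t)$, whence $C_n^{+}=C_n^{-}$.

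The main obstacle is the sharp $L^1$-error computation for each $f_n$ via the CLV machinery and the identification of the correct measure in its Gaussian-subordination representation, so that the evaluation collapses to geometric-series $\sum_{k\geq 1}2^{-k(n+1)}$-type sums producing the claimed $\zeta(n+1)/(\pi\cdot 2^{n+1})$ and $(1-2^{-n})\zeta(n+1)/(\pi\cdot 2^{n+1})$. Once those sharp constants are in place, the Guinand--Weil-to-prime-sum passage is routine (given the exponential-type support of $\widehat{M_\Delta-f_n}$ and the calibrated $\Delta$), and the interpolation for even $n$ reduces to the two-variable calculus optimization sketched above.
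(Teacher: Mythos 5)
Your proposal follows essentially the same route as the paper: a representation of $S_n(t)$ as a sum of $f_n(t-\gamma)$ over zeros (Lemma~\ref{Rep_lem}), the Carneiro--Littmann--Vaaler Gaussian-subordination extremal functions for the even functions $f_{2m+1}$, the Guinand--Weil explicit formula, a calibrated choice of the type parameter $\Delta$, and the mean-value/integration interpolation for even $n$ with the same two-stage optimization giving~\eqref{Bound_Thm1_n_even}. Two things in your odd-$n$ discussion are, however, wrong as written.

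First, the calibration of $\Delta$. You say ``Choosing $2\pi\Delta=\log t/\log\log t$ renders the prime sum of lower order,'' but this is far too large. The prime-power sum, truncated by $\supp \widehat{M}_\Delta \subset [-\Delta,\Delta]$, is $\asymp \sum_{n\le e^{2\pi\Delta}}\Lambda(n)/\sqrt n \asymp e^{\pi\Delta}$; with your choice $e^{\pi\Delta}=t^{1/(2\log\log t)}$, which dwarfs $\log t$. Moreover the main contribution from the $L^1$-error scales as $\log t/(2\pi\Delta)^{n+1}$, so to land on $\zeta(n+1)/(\pi\cdot 2^{n+1})\cdot\log t/(\log\log t)^{n+1}$ you must take $2\pi\Delta=2\log\log t\,(1+o(1))$; the paper's choice is $\pi\Delta=\log\log t-(n+2)\log\log\log t$. (Also, the prime-power sum is controlled by $\widehat{M}_\Delta$ itself, not by $\widehat{M_\Delta-f_n}$, which is not compactly supported since $f_n$ has no exponential-type structure.)

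Second, the explanation of the $n\equiv 1,3 \pmod 4$ dichotomy. All the functions $f_{2m+1}$ used for odd $n$ are \emph{even}, so it is not ``the parity of $f_n$'' that determines which side is sharper. The split comes from the sign $(-1)^m$ multiplying $\sum_\gamma f_{2m+1}(t-\gamma)$ in the representation~\eqref{Lem2_eq_representation_odd}: when $m$ is even the minorant controls the upper bound of $S_{2m+1}$ and the majorant the lower bound, and when $m$ is odd these roles swap, producing the reported exchange of $C_n^+$ and $C_n^-$ between $n=4k+1$ and $n=4k+3$. With these two points fixed, your outline coincides with the paper's proof; the even-$n$ interpolation and its optimization (including $r^\ast=C_{n-1}^-/(C_{n-1}^++C_{n-1}^-)$ and the balance in the scale parameter) is exactly the paper's Section~\ref{Int_sec}.
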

For $n=0$ and $n=1$ this is a restatement of the result of Carneiro, Chandee and Milinovich \cite{CCM}. The novelty here are the cases $n\geq 2$. Observe that $C_{n}^{\pm} \sim \frac{1}{\pi \cdot 2^{n+1}}$ when $n$ is odd and large and $C_{n}^{\pm} \sim \frac{\sqrt{2}}{\pi \cdot 2^{n+1}}$ when $n$ is even and large. We highlight the contrast between these exponentially decaying bounds and the previously known bounds \eqref{Intro_S_n_t_bound} of Wakasa \cite{W} that verify
$$\lim_{n\to \infty} W_n = \frac{1}{2\pi \left(1 - \tfrac{1}{e}\left(1 + \tfrac{1}{e}\right)\right)} = 0.3203696...$$
Table 1 puts in perspective the new bounds of our Theorem \ref{Thm1} and the previously known bounds \eqref{Intro_S_n_t_bound} in the small cases $2 \leq n \leq 10$. The last column reports the improvement factor.
\begin{table}
	\begin{center}
		\begin{tabular}{|l|l|l|l|l|l|}
			\hline
			$n$ & $C_n^{-}$ & $C_n^{+}$ & $W_n$ & $W_n\,/\max\{C_n^{-},C_n^{+}\}$\\
			\hline \hline
			2 & 0.0593564... & 0.0593564... & 0.6002288... & 10.1122762...\\ \hline
			3 & 0.0188406... & 0.0215321... & 0.3426156... & 15.9118250...\\ \hline
			4 & 0.0141490... & 0.0141490... & 0.3509932... & 24.8069103...\\ \hline
			5 & 0.0050598... & 0.0049017... & 0.3254151... & 64.3131985...\\ \hline
			6 & 0.0035192... & 0.0035192... & 0.3235655... & 91.9420229...\\ \hline
			7 & 0.0012387... & 0.0012484... & 0.3216216... & 257.6130647...\\ \hline
			8 & 0.0008792... & 0.0008792... & 0.3210078... & 365.0786196...\\ \hline
			9 & 0.0003111... & 0.0003105... & 0.3206826... & 1030.6078264...\\ \hline
			10& 0.0002198... & 0.0002198... & 0.3205263... & 1458.2249832...\\ \hline	
\end{tabular}
\vspace{0.2cm}
		\caption{Comparison for $2 \leq n \leq 10$.}
		\end{center}
	\end{table}
	
\subsection{Strategy outline} Our approach is partly motivated (in the case of $n$ odd) by the ideas of Goldston and Gonek \cite{GG}, Chandee and Soundararajan \cite{CS}, and Carneiro, Chandee and Milinovich \cite{CCM}, on the use of the Guinand-Weil explicit formula on special functions with compactly supported Fourier transforms (drawn from \cite{V},  \cite{CV2} and \cite{CL,CLV} respectively) to bound objects related to the Riemann zeta-function.

\medskip

The first step is to identify certain particular functions of a real variable naturally connected to the moments $S_n(t)$. For each $n\geq 0$ define a normalized function $f_n:\R \to \R$ as follows:

\smallskip

\begin{itemize}
\item If $n = 2m$, for $m \in \Z^+$, we define
\begin{equation}\label{Def_f_2m}
f_{2m}(x)=(-1)^m x^{2m}\arctan\left(\frac{1}{x}\right) +\sum_{k=0}^{m-1}\frac{(-1)^{m-k+1}}{2k+1} \,x^{2m-2k-1} -\dfrac{x}{(2m+1)(1+x^2)}.
\end{equation}  
\smallskip
\item If $n = 2m+1$, for $m \in \Z^+$, we define
\begin{equation}\label{Def_f_2m+1}
f_{2m+1}(x)=\dfrac{1}{(2m+1)}\left[(-1)^{m+1}x^{2m+1}\arctan\left(\frac{1}{x}\right) + \sum_{k=0}^{m}\dfrac{(-1)^{m-k}}{2k+1}x^{2m-2k}\right].
\end{equation}
\end{itemize}
\smallskip
We show in Lemma \ref{Rep_lem} below that, under RH, $S_n(t)$ can be expressed in terms of the sum of a translate of $f_n$ over the ordinates of the non-trivial zeros of $\zeta(s)$. From the power series representation (around the origin)
$$\arctan x = \sum_{k=0}^{\infty} \frac{(-1)^k}{2k+1}\,x^{2k+1}$$
one can check that $f_{2m}(x) \ll_m |x|^{-3}$ and $f_{2m+1}(x) \ll_m |x|^{-2}$ as $|x| \to \infty$. This rather innocent piece of information is absolutely crucial in our argument.

\medskip

Since $f_n$ is of class $C^{n-1}$ but not higher (the $n$-th derivative of $f_n$ is discontinuous at $x=0$) it will be convenient to replace $f_n$ by one-sided entire approximations of exponential type in a way that minimizes the $L^1(\R)-$error. This is the so called {\it Beurling-Selberg extremal problem} in approximation theory. These special functions have been useful in several classical applications in number theory (see for instance the excellent survey \cite{V} by J. D. Vaaler and some of the references therein) and have recently been used in connection to the theory of the Riemann zeta-function in \cite{CC, CCLM, CCM, CCM2, CF, CS, G, GG}. 

\medskip

We shall see that the even functions $f_{2m+1}$, for $m \in \Z^+$, fall under the scope of the Gaussian subordination framework of \cite{CLV}. This yields the desired existence and qualitative description of the Beurling-Selberg extremal functions in these cases (Lemma \ref{lema extremal} below) and ultimately leads to the bounds of Theorem \ref{Thm1} for $n$ odd. When $n$ is even, our argument is subtler since the functions $f_{2m}$ are odd. The Gaussian subordination framework for odd functions \cite{CL} only allows us to solve the Beurling-Selberg problem for a class of functions {\it with a discontinuity at the origin}. This is the case, for example, with the function $f_0(x) = \arctan(1/x) - x/(1+x^2)$, and this was explored in \cite{CCM} to show \eqref{Intro_S_t_bound}. For $m \geq 1$, the functions $f_{2m}$ are all odd and continuous, and the solution of the Beurling-Selberg problem for these functions is quite a delicate issue and currently unknown. We are then forced to take a very different path in this case. Having obtained \eqref{Intro_Thm_1_eq} for all odd $n$'s, we proceed with an interpolation argument to obtain the estimate for the even $n$'s in between, exploring the smoothness of $S_n(t)$ via the mean value theorem and solving two optimization problems to arrive at the bound \eqref{Bound_Thm1_n_even}.

\subsection{Extension to $L-$functions} In Section \ref{Sec_L_functions} we briefly present the extension of Theorem \ref{Thm1} to a general class of $L-$functions. In particular, this includes the Dirichlet $L$-functions $L(\cdot, \chi)$ for primitive characters $\chi$.

\section{Representation lemma}

Our starting point is the following result contained in the work of Fujii \cite{F1}.

\begin{lemma}\label{Lem2}
Assume the Riemann hypothesis. For $n \geq 0$ and $t >0$ $($$t$ not coinciding with the ordinate of a zero of $\zeta(s)$ when $n=0$$)$ we have
\begin{equation}\label{Lem1_eq_1}
S_n(t) = -\frac{1}{\pi} \,\,\im{\left\{\dfrac{i^{n}}{n!}\int_{1/2}^{\infty}{\left(\sigma-\tfrac{1}{2}\right)^{n}\,\frac{\zeta'}{\zeta}(\sigma+it)}\,\d\sigma\right\}}.
\end{equation}
\end{lemma}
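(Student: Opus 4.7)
My plan is to induct on $n$, using the defining relation $S_n(t) = \int_0^t S_{n-1}(\tau)\,\d\tau + \delta_n$. The base case $n=0$ reduces to the classical identity $\pi S(t) = -\im\int_{1/2}^\infty (\zeta'/\zeta)(\sigma+it)\,\d\sigma$, which follows because under RH the principal branch of $\log\zeta$ extends analytically to $\{\re s > 1/2\}$ (with the pole at $s=1$ harmless for $t>0$) and satisfies $\log\zeta(\sigma+it)\to 0$ as $\sigma\to\infty$; hence $\log\zeta(\tfrac12+it) = -\int_{1/2}^\infty(\zeta'/\zeta)(\sigma+it)\,\d\sigma$, and taking imaginary parts matches the continuous-variation definition of $\arg\zeta(\tfrac12+it)$.

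For the inductive step, I substitute the formula for $S_{n-1}$ into the defining integral for $S_n$ and swap the order of the $\sigma$- and $\tau$-integrations (legitimate because $(\zeta'/\zeta)(\sigma+i\tau)$ is continuous on $\{\re s > 1/2\}$ for $\tau$ in compact intervals away from $0$, and decays exponentially as $\sigma\to\infty$ uniformly in $\tau$). The inner $\tau$-integral evaluates to $(1/i)\bigl[\log\zeta(\sigma+it)-\log\zeta(\sigma+i0^+)\bigr]$, where $\log\zeta$ denotes the branch continuously extended from $+\infty$ through the upper half-plane. An integration by parts in $\sigma$ then raises the exponent $(\sigma-1/2)^{n-1}$ to $(\sigma-1/2)^n$ and reintroduces $\zeta'/\zeta$; the boundary terms vanish at $\sigma=1/2$ (because of the $(\sigma-1/2)^n$ factor) and at $\sigma=\infty$ (because of the exponential decay of $\log\zeta$). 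A brief check on the powers of $i$ (using $i^{n-1}\cdot i^{-1}\cdot(-1) = i^n$) produces exactly the main term of \eqref{Lem1_eq_1}.

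The residual from the IBP is the $t$-independent quantity
\[
R_n \;:=\; \frac{1}{\pi}\,\im\!\left\{\frac{i^{n-2}}{(n-1)!}\int_{1/2}^\infty \bigl(\sigma-\tfrac12\bigr)^{n-1}\log\zeta(\sigma+i0^+)\,\d\sigma\right\},
\]
and the induction closes provided $R_n = -\delta_n$. This identification is the main obstacle and requires careful sign bookkeeping. Under RH, $\zeta(\sigma)>0$ on $(1,\infty)$ and $\zeta(\sigma)<0$ on $(1/2,1)$, while the branch continuously extended from $+\infty$ through the upper half-plane accumulates $-i\pi$ on traversing the pole at $s=1$, so that $\log\zeta(\sigma+i0^+) = \log|\zeta(\sigma)| - i\pi\,\mathbf{1}_{(1/2,1)}(\sigma)$. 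The parity of $n$ then selects which component survives under $\im$: for $n=2k-1$ one picks off the real part $\log|\zeta(\sigma)|$, and rewriting $(\sigma-1/2)^{n-1}/(n-1)!$ as an $(n-1)$-fold iterated integral via Fubini recovers the stated $\delta_{2k-1}$; for $n=2k$ the imaginary piece $-\pi\,\mathbf{1}_{(1/2,1)}$ is selected, which analogously yields $\delta_{2k} = (-1)^{k-1}/((2k)!\,2^{2k})$. A direct check of the small cases $n=1,2$ pins down the sign conventions, after which the general identification of $R_n$ with $-\delta_n$ proceeds by the same mechanism.
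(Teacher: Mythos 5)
Your proof is correct and is essentially the same as the paper's sketch: an induction on $n$ driven by integration by parts, with the induction closing once the constant of integration is identified with $\delta_n$. The paper runs the same mechanism in reverse (it differentiates the candidate $R_m(t)$ to obtain $R_{m-1}=S_{m-1}$ and then verifies $\lim_{t\to 0^+}R_m(t)=\delta_m$ by iterated integration by parts), whereas you integrate $S_{n-1}$ forward via a Fubini swap and an integration by parts in $\sigma$; either way the crux is the sign-bookkeeping showing the residual equals $\mp\delta_n$, which you carry out correctly, including the $-i\pi\,\mathbf{1}_{(1/2,1)}$ jump of $\log\zeta$ across the pole at $s=1$.
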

\begin{proof}
This is \cite[Lemmas 1 and 2]{F1}. We provide here a brief sketch of the proof. Let $R_n(t)$ be the expression on the right-hand side of \eqref{Lem1_eq_1}. The validity of the formula for $n=0$ is clear. Proceeding by induction, let us assume that the result holds for $n = 0,1,2, \ldots, m-1$. Differentiating under the integral sign and using integration by parts one can check that $R_m'(t) = R_{m-1}(t) = S_{m-1}(t)$ (for $m=1$ we may restrict ourselves to the case when $t$ does not coincide with the ordinate of a zero of $\zeta(s)$). From \eqref{Intro_eq1_int_S_n} it remains to show that $\lim_{t \to 0^+}R_m(t) = \delta_m$ for $m \geq 1$. This follows by integrating by parts $m$ times and then taking the limit as $t \to 0^+$.
\end{proof}
The next result establishes the connection between $S_n$ and the functions $f_n$ defined in \eqref{Def_f_2m} - \eqref{Def_f_2m+1}. In the proof of Theorem \ref{Thm1} we shall only use the case of $n$ odd, but we state here the representation for $n$ even as well, as a result of independent interest.

\begin{lemma}[Representation lemma]\label{Rep_lem}
For each $n\geq 0$ let $f_n:\R \to \R$ be defined as in \eqref{Def_f_2m} - \eqref{Def_f_2m+1}. Assume the Riemann hypothesis. For $t\geq2$ $($and $t$ not coinciding with an ordinate of a zero of $\zeta(s)$ in the case $n=0$$)$ we have:
\begin{itemize}
\item[(i)] If $n = 2m$, for $m \in \Z^+$, then
\begin{equation} \label{Lem2_eq_representation_even}
S_{2m}(t)= \frac{(-1)^{m}}{\pi(2m)!} \,\sum_{\gamma}f_{2m}(t-\gamma)\,+\,O(1).  
\end{equation}
\item[(ii)] If $n = 2m+1$, for $m \in \Z^+$, then
\begin{equation} \label{Lem2_eq_representation_odd}
S_{2m+1}(t)=\frac{(-1)^{m}}{2\pi(2m+2)!}\log t -\frac{(-1)^{m}}{\pi(2m)!} \sum_{\gamma}f_{2m+1}(t-\gamma) \,+ \,O(1).
\end{equation}
\end{itemize}
The sums in \eqref{Lem2_eq_representation_even} and \eqref{Lem2_eq_representation_odd} run over the ordinates of the non-trivial zeros $\rho = \tfrac12 + i \gamma$ of $\zeta(s)$. 
\end{lemma}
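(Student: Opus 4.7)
My plan is to start from Lemma \ref{Lem2} and substitute the Hadamard-derived partial fraction expansion
\begin{equation*}
\frac{\zeta'}{\zeta}(s) \,=\, \sum_\rho \left(\frac{1}{s-\rho} + \frac{1}{\rho}\right) - \frac{1}{s-1} - \frac{1}{2}\frac{\Gamma'}{\Gamma}\!\left(\tfrac{s}{2}+1\right) + C_0
\end{equation*}
into the integral in \eqref{Lem1_eq_1}. This splits the task into a ``zero-sum contribution'' and an ``analytic contribution'' (pole at $s=1$, gamma factor, constants). Neither piece converges on its own after the split, so I would regularize by truncating the $\sigma$-integral at $\sigma=T$, computing each piece explicitly, and sending $T\to\infty$ at the end, relying on the decay of $\zeta'/\zeta$ as $\re s\to\infty$ to ensure that the divergent growth terms from both sides cancel.

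For the analytic side, the gamma-factor term is handled by Stirling's asymptotic $\tfrac{\Gamma'}{\Gamma}(s/2+1)\sim\log(s/2)$ and produces an $O(1)$ remainder. The pole term $-\int(\sigma-1/2)^n/(\sigma+it-1)\,\d\sigma$ is where the odd-$n$ main term $\frac{(-1)^m\log t}{2\pi(2m+2)!}$ in \eqref{Lem2_eq_representation_odd} emerges: the imaginary part of $i^n/n!$ times this integral picks up a $\log t$-type piece only when $n$ is odd (since real and imaginary parts behave asymmetrically in $t$), while for even $n$ the contribution stays $O(1)$. For the zero-sum side, after interchange of sum and integral in the truncated problem (valid by absolute convergence of $\sum_\rho(1/(s-\rho)+1/\rho)$), the inner integral
\begin{equation*}
\int_{1/2}^T (\sigma-1/2)^n \left[\frac{1}{\sigma+it-\rho} + \frac{1}{\rho}\right]\d\sigma
\end{equation*}
has a closed form, obtained via the elementary expansion $u^n/(u+ia)=\sum_{k=0}^{n-1}(-ia)^k u^{n-1-k}+(-ia)^n/(u+ia)$ with $u=\sigma-1/2$ and $a=t-\gamma$. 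As $T\to\infty$, the polynomial-in-$T$ and $\log T$ terms cancel against the analogous growth on the analytic side, leaving a finite per-zero remainder of the shape $(-i(t-\gamma))^n\log(i(t-\gamma))$ plus polynomial corrections in $(t-\gamma)$ coming from the $1/\rho$ regularizers and from the Stirling expansion.

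Taking the imaginary part of $i^n/n!$ times this per-zero remainder and using $\log(ia)=\log|a|+i\tfrac{\pi}{2}\sgn(a)$, the $\tfrac{\pi}{2}\sgn(a)$ piece produces an $(-1)^m a^n\sgn(a)$-type term which, after combining with the polynomial corrections, gives exactly $(-1)^m f_n(t-\gamma)/(2m)!$; the polynomial sums appearing in \eqref{Def_f_2m} and \eqref{Def_f_2m+1} come out precisely as the ``subtractions'' of the large-$|x|$ asymptotic expansion of $x^n\arctan(1/x)$ needed to produce the improved decay $f_{2m+1}(x)\ll|x|^{-2}$ and $f_{2m}(x)\ll|x|^{-3}$ mentioned in the introduction. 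Absolute convergence of the resulting sum over $\gamma$ then follows from these decay rates together with the Riemann--von Mangoldt spacing $N(T+1)-N(T)=O(\log T)$. The main obstacle is the meticulous bookkeeping of divergent growth terms as $T\to\infty$: one must verify that every polynomial-in-$T$ and $\log T$ contribution from the zero sum cancels exactly against those from the analytic part, and then identify the surviving polynomial remainders (from the $1/\rho$ factors and the Stirling expansion) with the explicit polynomial expressions in the definitions \eqref{Def_f_2m}--\eqref{Def_f_2m+1} of $f_n$.
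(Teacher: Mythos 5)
Your overall strategy---substitute the partial-fraction/Hadamard decomposition of $\zeta'/\zeta$ into Fujii's formula \eqref{Lem1_eq_1} and integrate term by term---is the right raw material, but the $T\to\infty$ regularization you propose has a genuine gap that I do not see how to close. When you truncate at $\sigma=T$ and compute the per-zero integral $\int_{1/2}^{T}(\sigma-\tfrac12)^n\big[(\sigma+it-\rho)^{-1}+\rho^{-1}\big]\,\d\sigma$, the divergent parts you want to discard have the form $\sum_{k<n}\tfrac{(-i(t-\gamma))^k}{n-k}\,T^{n-k}+(-i(t-\gamma))^n\log T+\tfrac{T^{n+1}}{(n+1)\rho}$, whose coefficients depend on $\gamma$. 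For the cancellation against the analytic side to be meaningful you would need the sums $\sum_\gamma (t-\gamma)^k$ to exist, and they do not; in fact $\sum_\gamma\int_{1/2}^T(\sigma-\tfrac12)^n[\,\cdot\,]\,\d\sigma$ diverges as $T\to\infty$ with the dominant contribution coming from the many zeros at height $|\gamma|\gtrsim T$, so the divergence is not a finite sum of manifestly divergent per-zero pieces at all. The limit $T\to\infty$ and the sum over $\rho$ simply cannot be interchanged, and ``finite part of each zero'' is not well defined in your decomposition. This is exactly the convergence issue the paper sidesteps with two devices you do not use: it truncates the $\sigma$-integral at the \emph{fixed} abscissa $3/2$ (the exponential decay of $\log\zeta$, resp.\ $\zeta'/\zeta$, makes the tail $O(1)$ outright, no $T\to\infty$ needed), and it introduces the test point $3/2+it$ --- for $n$ odd via the ratio $\xi(\sigma+it)/\xi(\tfrac32+it)$ in the Hadamard product, for $n$ even via subtracting $\zeta'/\zeta(\tfrac32+it)=O(1)$ --- which turns each per-zero term into something of size $O(|t-\gamma|^{-3})$ (even $n$) or a nonnegative logarithm (odd $n$), yielding absolute convergence and a legitimate sum-integral interchange. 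The paper's remark after the proof flags the test point as the crucial step for precisely this reason.

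Two further points. First, your attribution of the $\tfrac{(-1)^m}{2\pi(2m+2)!}\log t$ main term to the pole $-1/(s-1)$ is not right as stated: in the paper's version (after one integration by parts to $\log\zeta$, which you skip) the $\log t$ comes entirely from the Stirling-asymptotics of the Gamma factor, $\int_{1/2}^{3/2}(\sigma-\tfrac12)^{2m}(\tfrac34-\tfrac{\sigma}{2})\,\d\sigma\,\log t=\tfrac{\log t}{2(2m+1)(2m+2)}$. A direct check for $n=1$ in your normalization shows the finite part of the pole contributes $-\tfrac{\log t}{2\pi}$, which by itself does not yield the correct coefficient $+\tfrac{\log t}{4\pi}$; the true coefficient arises from a cancellation among pole, Gamma, and zero-sum. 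Second, the identification of the per-zero finite part with $f_n(t-\gamma)$ is more delicate than you suggest: in your computation the per-zero finite part is built from $\log(ia)=\log|a|+i\tfrac{\pi}{2}\,\sgn a$, i.e.\ from $\sgn$, whereas the functions in \eqref{Def_f_2m}--\eqref{Def_f_2m+1} involve $\arctan(1/x)$, a transcendental function not reducible to $\sgn$ plus finitely many polynomials. In the paper that $\arctan$ arises from the integral over the \emph{finite} interval $[1/2,3/2]$, e.g.\ $\int_0^1\frac{a\,u^{2m}}{u^2+a^2}\,\du=(-1)^m a^{2m}\arctan(1/a)+\text{polynomial}$; your $T\to\infty$ version of the same integral would instead produce $\tfrac{\pi}{2}\sgn(a)\cdot a^{2m}$, which is a different function, and no finite list of polynomial corrections recovers $\arctan(1/a)$ from it.
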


\begin{proof} We split the proof into two cases: $n$ odd and $n$ even.

\medskip

\noindent {\it Case 1. $n$ odd:} Write $n = 2m +1$. It follows from Lemma \ref{Lem2} and integration by parts that 
\begin{align}\label{expansion_S_m_odd}
\begin{split}
S_{2m+1}(t) &= -\frac{1}{\pi} \,\,\im {\left\{\frac{i^{2m+1}}{(2m+1)!}\int_{1/2}^{\infty}{\left(\sigma-\tfrac{1}{2}\right)^{2m+1}\,\frac{\zeta'}{\zeta}(\sigma+it)}\,\d\sigma\right\}}\\
& = \frac{(-1)^{m+1}}{\pi (2m+1)!}\, \re {\left\{\int_{1/2}^{\infty}{\left(\sigma-\tfrac{1}{2}\right)^{2m+1}\,\frac{\zeta'}{\zeta}(\sigma+it)}\,\d\sigma\right\}}\\
& = \frac{(-1)^{m}}{\pi (2m)!}\, \re {\left\{\int_{1/2}^{\infty}{\left(\sigma-\tfrac{1}{2}\right)^{2m}\,\log \zeta(\sigma+it)}\,\d\sigma\right\}}\\
& = \frac{(-1)^{m}}{\pi (2m)!}\, \left\{\int_{1/2}^{3/2}{\left(\sigma-\tfrac{1}{2}\right)^{2m}\,\log |\zeta(\sigma+it)}|\,\d\sigma\right\} + O(1).
\end{split}
\end{align}
The idea is to replace the integrand by an absolutely convergent sum over the zeros of $\zeta(s)$ and then integrate term-by-term. We consider Riemann's $\xi-$function, defined by 
$$\xi(s)=\frac{1}{2}\,s\,(s-1)\,\pi^{-s/2}\,\Gamma\left(\frac{s}{2}\right)\,\zeta(s).$$ 
The function $\xi(s)$ is entire of order $1$ and the zeros of $\xi(s)$ correspond to the non-trivial zeros of $\zeta(s)$. By Hadamard's factorization formula (cf. \cite[Chapter 12]{Dav}), we have
\begin{equation}\label{Had_fact}
\xi(s)=e^{A+Bs}\displaystyle\prod_{\rho}\bigg(1-\frac{s}{\rho}\bigg)e^{s/\rho}\,,
\end{equation}
where $\rho = \beta + i \gamma$ runs over the non-trivial zeros of $\zeta(s)$, A is a constant and $B=-\sum_{\rho}\re(1/\rho)$. Note that $\re(1/\rho)$ is positive and that $\sum_{\rho}\re(1/\rho)$ converges absolutely. 

\medskip

Assuming the Riemann hypothesis, it follows that
\begin{equation}\label{test_point_1}
\left|\dfrac{\xi(\sigma+it)}{\xi(\tfrac32+it)}\right|=\displaystyle\prod_{\gamma}\left(\dfrac{\big(\sigma-\tfrac12)^2+(t-\gamma)^2}{1+(t-\gamma)^2}\right)^{\frac12}.
\end{equation}
Hence
$$
\log|\xi(\sigma+it)|-\log\left|\xi\left(\tfrac{3}{2}+it\right)\right|= \dfrac{1}{2}\displaystyle\sum_{\gamma}\log\left(\dfrac{\big(\sigma-\tfrac12)^2+(t-\gamma)^2}{1+(t-\gamma)^2}\right).
$$
By Stirling's formula for $\Gamma(s)$ (cf. \cite[Chapter 10]{Dav}) we obtain
\begin{align} \label{Stirling_1}
\log|\zeta(\sigma+it)|=\left(\tfrac{3}{4}-\tfrac{\sigma}{2}\right)\log t - \frac{1}{2}\displaystyle\sum_{\gamma}\log\left(\frac{1+(t-\gamma)^2}{\big(\sigma-\frac12)^2+(t-\gamma)^2}\right) + O(1), 
\end{align}
uniformly for $1/2\leq \sigma \leq 3/2$ and $t\geq 2$. Inserting \eqref{Stirling_1} into \eqref{expansion_S_m_odd} yields 
\begin{align}\label{Lem_2_eq_2_formula_S_2m+1}
\begin{split}
S_{2m+1}(t)  & = \dfrac{(-1)^{m}}{\pi(2m)!}\left(\int_{1/2}^{3/2}\left(\sigma-\tfrac{1}{2}\right)^{2m}\left(\tfrac{3}{4}-\tfrac{\sigma}{2}\right) \,\d \sigma \right)\log t\\
& \ \ \ \ \ \ \ \ \ \ \ -\frac{(-1)^{m}}{2\pi(2m)!} \int_{1/2}^{3/2}{\sum_{\gamma}\left(\sigma-\tfrac{1}{2}\right)^{2m}\log\left(\dfrac{1+(t-\gamma)^2}{(\sigma-\tfrac12)^2+(t-\gamma)^2}\right)} \,\d\sigma + O(1) \\
& = \frac{(-1)^{m}}{2\pi(2m+2)!}\log t -\frac{(-1)^{m}}{2\pi(2m)!} \sum_{\gamma}\int_{1/2}^{3/2}\left(\sigma-\tfrac{1}{2}\right)^{2m}\log\left(\dfrac{1+(t-\gamma)^2}{(\sigma-\tfrac12)^2+(t-\gamma)^2}\right) \,\d\sigma + O(1) \\
              & = \dfrac{(-1)^{m}}{2\pi(2m+2)!}\log t-\dfrac{(-1)^{m}}{\pi(2m)!} \sum_{\gamma} f_{2m+1}(t-\gamma) + O(1),   
\end{split}       
\end{align}
where the function $f_{2m+1}$ is (momentarily) defined by
\begin{align}\label{definition_f_2m+1_2}
f_{2m+1}(x)=\frac{1}{2}\int_{1/2}^{3/2}{\left(\sigma-\tfrac{1}{2}\right)^{2m}\log\left(\dfrac{1+x^2}{(\sigma-1/2)^2+x^2}\right)} \,\d\sigma\,,  
\end{align}
and the interchange between the sum and integral in \eqref{Lem_2_eq_2_formula_S_2m+1} is justified by monotone convergence since all the terms involved are nonnegative. Starting from \eqref{definition_f_2m+1_2}, a change of variables and the use of formula \cite[2.731]{GR} yield
\begin{align*}
f_{2m+1}(x) & =\frac{1}{2}\int_{0}^{1}{\sigma}^{2m}\log\left(\dfrac{1+x^2}{\sigma^2+x^2}\right) \,\d\sigma \\
& =\frac{\log(1+x^2)}{2(2m+1)}-\frac{1}{2}\int_{0}^{1}{\sigma}^{2m}\log(\sigma^2+x^2) \,\d\sigma \\
& = \frac{\log(1+x^2)}{2(2m+1)}-\frac{1}{2(2m+1)}\bigg[\sigma^{2m+1}\log(\sigma^2+x^2)+(-1)^m2x^{2m+1}\arctan\Big(\dfrac{\sigma}{x}\Big) \\
&  \ \ \ \ \ \ \ \ \ \ \ \ \ \  \ \ \ \ \     -2\displaystyle\sum_{k=0}^{m}\dfrac{(-1)^{m-k}}{2k+1}x^{2m-2k}\sigma^{2k+1}\bigg]\Bigg|^{1}_{0}\\
& = \dfrac{1}{(2m+1)}\left[(-1)^{m+1}x^{2m+1}\arctan\left(\frac{1}{x}\right) + \sum_{k=0}^{m}\dfrac{(-1)^{m-k}}{2k+1}x^{2m-2k}\right].
\end{align*}
This shows that the two definitions \eqref{Def_f_2m+1} and \eqref{definition_f_2m+1_2} agree, which completes the proof in this case.
  
\medskip

\noindent {\it Case 2. $n$ even:} Write $n = 2m$. From Lemma \ref{Lem2} it follows that
\begin{align}
\begin{split}\label{Lem_2_case_2_S_2m}
S_{2m}(t) & = -\frac{1}{\pi} \,\,\im{\left\{\frac{i^{2m}}{(2m)!}\int_{1/2}^{\infty}{\left(\sigma-\tfrac{1}{2}\right)^{2m}\,\frac{\zeta'}{\zeta}(\sigma+it)}\,\d\sigma\right\}}\\
	& = \dfrac{(-1)^{m+1}}{\pi(2m)!}\,\,\im{\left\{\int_{1/2}^{3/2}{\left(\sigma-\tfrac{1}{2}\right)^{2m}\,\frac{\zeta'}{\zeta}(\sigma+it)}\,\d\sigma\right\}} + O(1).
\end{split}
\end{align}
	We again replace the integrand by an absolutely convergent sum over the non-trivial zeros of $\zeta(s)$. Let $s=\sigma+it$. If $s$ is not a zero of $\zeta(s)$, then the partial fraction decomposition for $\zeta'(s)/\zeta(s)$ (cf. \cite[Chapter 12]{Dav}) and Stirling's formula for $\Gamma'(s)/\Gamma(s)$ (cf. \cite[Chapter 10]{Dav}) imply that
	\begin{align}\label{cota} 
	\begin{split}
	\dfrac{\zeta'}{\zeta}(s) & = \displaystyle\sum_{\rho}\left(\frac{1}{s-\rho}+\frac{1}{\rho}\right) - \frac{1}{2}\frac{\Gamma'}{\Gamma}\bigg(\dfrac{s}{2}+1\bigg)+O(1)  \\
	                                  & = \displaystyle\sum_{\rho}\bigg(\frac{1}{s-\rho}+\frac{1}{\rho}\bigg) - \dfrac{1}{2}\log\left(\frac{t}{2}\right)+O(1) 
	\end{split}
	\end{align}
uniformly for $\frac{1}{2}\leq\sigma\leq\frac{3}{2}$ and $t\geq2$, where the sum runs over the non-trivial zeros $\rho$ of $\zeta(s)$. Assume that $t$ is not the ordinate of a zero of $\zeta(s)$. Then, from \eqref{Lem_2_case_2_S_2m}, \eqref{cota} and the Riemann hypothesis, it follows that
	\begin{align}\label{test_point_2}
	\begin{split}
	S_{2m}(t) & = \dfrac{(-1)^{m+1}}{\pi(2m)!}\,\int_{1/2}^{3/2}{\left(\sigma-\tfrac{1}{2}\right)^{2m}\,\im\!\left\{\frac{\zeta'}{\zeta}(\sigma+it)\right\}}\,\d\sigma + O(1)\\
	& = \dfrac{(-1)^{m+1}}{\pi(2m)!}\,\int_{1/2}^{3/2}{\left(\sigma-\tfrac{1}{2}\right)^{2m}\,\im\!\left\{\frac{\zeta'}{\zeta}(\sigma+it) - \frac{\zeta'}{\zeta}\left(\tfrac32+it\right)\right\}}\,\d\sigma + O(1)\\
	          & = \dfrac{(-1)^{m}}{\pi(2m)!}\int_{1/2}^{3/2}{\left(\sigma-\tfrac{1}{2}\right)^{2m}\displaystyle\sum_{\gamma}\Bigg\{\dfrac{(t-\gamma)}{(\sigma-\frac{1}{2})^2+(t-\gamma)^2}-\dfrac{(t-\gamma)}{1+(t-\gamma)^2}\Bigg\}}\,\d\sigma + O(1) \\
	          & = \dfrac{(-1)^{m}}{\pi(2m)!}\displaystyle\sum_{\gamma}\int_{1/2}^{3/2}{\Biggl\{\dfrac{(\sigma-\frac{1}{2})^{2m}(t-\gamma)}{(\sigma-\frac{1}{2})^2+(t-\gamma)^2}-\dfrac{(\sigma-\frac{1}{2})^{2m}(t-\gamma)}{1+(t-\gamma)^2}\Biggl\}}\,\d\sigma + O(1)\\
	          & = \dfrac{(-1)^{m}}{\pi(2m)!}\displaystyle\sum_{\gamma}\Bigg[\displaystyle\sum_{j=1}^{m}(-1)^{j+1} \dfrac{(t-\gamma)^{2j-1}}{2m-2j+1}+(-1)^m(t-\gamma)^{2m}\arctan\bigg(\dfrac{1}{t-\gamma}\bigg) \\
	          & \ \ \ \ \ \ \ \ \ \ \ \ \ \ \ \ \ \ \ \ \ \ \ \ \ \ \ -\dfrac{t-\gamma}{(2m+1)(1+(t-\gamma)^2)}\Bigg]+O(1)\\
	          & = \dfrac{(-1)^{m}}{\pi(2m)!}\displaystyle\sum_{\gamma}\Bigg[\displaystyle\sum_{k=0}^{m-1}(-1)^{m-k+1} \dfrac{(t-\gamma)^{2m-2k-1}}{2k+1}+(-1)^m(t-\gamma)^{2m}\arctan\bigg(\dfrac{1}{t-\gamma}\bigg) \\
	          & \ \ \ \ \ \ \ \ \ \ \ \ \ \ \ \ \ \ \ \ \ \ \ \ \ \ \ -\dfrac{t-\gamma}{(2m+1)(1+(t-\gamma)^2)}\Bigg]+O(1)\\
	          &=\dfrac{(-1)^{m}}{\pi(2m)!}\displaystyle\sum_{\gamma}f_{2m}(t-\gamma)+O(1)\,,      
	          \end{split}   
	          \end{align}
where the interchange between the sum and the integral is justified by dominated convergence since $f_{2m}(x) \ll_m |x|^{-3}$ as $|x| \to \infty$. Finally, if $m \geq 1$, both sides can be extended continuously when $t$ is the ordinate of a zero of $\zeta(s)$.
\end{proof}

\noindent {\sc Remark:} Observe the introduction of a test point $\tfrac32 + it$ in a couple of passages in the proof above. This seemingly innocent object is actually quite important in dealing with the convergence issues.

\section{Proof of Theorem \ref{Thm1} in the case of $n$ odd}

\subsection{Preliminaries} The sum of $f_{2m+1}(t-\gamma)$ over the non-trivial zeros in \eqref{Lem2_eq_representation_odd} is too complicated to be evaluated directly, mainly due to the fact that $f_{2m+1}$ is only of class $C^{2m}$. The key idea to prove Theorem \ref{Thm1}  in this case is to replace the function $f_{2m+1}$ in \eqref{Lem2_eq_representation_odd} by an appropriate majorant or minorant of exponential type (thus with a compactly supported Fourier transform by the Paley-Wiener theorem). We then apply the following version of the Guinand-Weil explicit formula which connects the zeros of the zeta-function and the prime powers. 

\begin{lemma}[Guinand-Weil explicit formula] \label{GW}
Let $h(s)$ be analytic in the strip $|\im{s}|\leq \tfrac12+\varepsilon$ for some $\varepsilon>0$, and assume that $|h(s)|\ll(1+|s|)^{-(1+\delta)}$ for some $\delta>0$ when $|\re{s}|\to\infty$. Let $h(w)$ be real-valued for real $w$, and let $\widehat{h}(x)=\int_{-\infty}^{\infty}h(w)e^{-2\pi ixw}\,\dw$. Then
	\begin{align*}
	\displaystyle\sum_{\rho}h\left(\frac{\rho-\frac12}{i}\right) & = h\left(\dfrac{1}{2i}\right)+h\left(-\dfrac{1}{2i}\right)-\dfrac{1}{2\pi}\widehat{h}(0)\log\pi+\dfrac{1}{2\pi}\int_{-\infty}^{\infty}h(u)\,\re{\dfrac{\Gamma'}{\Gamma}\left(\dfrac{1}{4}+\dfrac{iu}{2}\right)}\,\du \\
	 &  \ \ \ \ \ \ \ \ \ \ \ \ \ -\dfrac{1}{2\pi}\displaystyle\sum_{n\geq2}\dfrac{\Lambda(n)}{\sqrt{n}}\left(\widehat{h}\left(\dfrac{\log n}{2\pi}\right)+\widehat{h}\left(\dfrac{-\log n}{2\pi}\right)\right)\,, 
	\end{align*}
where $\rho = \beta + i \gamma$ are the non-trivial zeros of $\zeta(s)$, $\Gamma'/\Gamma$ is the logarithmic derivative of the Gamma function, and $\Lambda(n)$ is the Von-Mangoldt function defined to be $\log p$ if $n=p^m$ with $p$ a prime number and $m\geq 1$ an integer, and zero otherwise.
\end{lemma}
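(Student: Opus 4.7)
The plan is to prove this as a standard residue-theorem identity, relating a sum over the non-trivial zeros of $\zeta(s)$ to a sum over prime powers via the functional equation. The hypotheses on $h$ (analyticity in a horizontal strip slightly larger than $|\im s| \leq 1/2$ together with polynomial decay) are precisely what is needed to push a contour from $\re s = 3/2$ across the critical strip to $\re s = -1/2$ and to discard the horizontal pieces at height $\pm T$ in the limit $T \to \infty$.

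First I would form the contour integral
$$\frac{1}{2\pi i}\oint_{\mathcal R_T} h\!\left(\frac{s-\tfrac12}{i}\right)\frac{\zeta'}{\zeta}(s)\,\d s,$$
where $\mathcal R_T$ is the rectangle with vertices $-\tfrac12\pm iT$ and $\tfrac32\pm iT$, letting $T\to\infty$ through values that keep a safe distance from the ordinates of the zeros (so that the standard bound $\zeta'/\zeta(\sigma\pm iT)\ll \log^2 T$ in the critical strip, combined with the decay of $h$, kills the horizontal sides). By the residue theorem the contour integral equals $\sum_\rho h((\rho-\tfrac12)/i) - h(1/(2i))$, where the isolated contribution comes from the simple pole of $\zeta'/\zeta$ at $s=1$ with residue $-1$.

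Next I would evaluate the two vertical pieces separately. On the right edge $\re s = \tfrac32$, the Dirichlet series $-\zeta'/\zeta(s)=\sum_{n\geq 2}\Lambda(n)n^{-s}$ converges absolutely; interchanging sum and integral and recognizing the Fourier transform yields $-\tfrac{1}{2\pi}\sum_{n\geq 2}\tfrac{\Lambda(n)}{\sqrt n}\widehat h(\log n/2\pi)$ after a further shift of the line of integration from $\re s = 3/2$ to $\re s = 1/2$ (legal by the decay of $h$). For the left edge $\re s = -\tfrac12$, the substitution $s\mapsto 1-s$ combined with the logarithmic derivative of the functional equation
$$\frac{\zeta'}{\zeta}(s)+\frac{\zeta'}{\zeta}(1-s) = -\log\pi - \tfrac12\frac{\Gamma'}{\Gamma}(s/2) - \tfrac12\frac{\Gamma'}{\Gamma}((1-s)/2)$$
transforms this piece into a mirror image on $\re s = \tfrac32$. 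This contributes (i) a second, symmetric prime-power sum $-\tfrac{1}{2\pi}\sum_{n\geq 2}\tfrac{\Lambda(n)}{\sqrt n}\widehat h(-\log n/2\pi)$, (ii) the archimedean term $\tfrac{1}{2\pi}\int_{-\infty}^{\infty}h(u)\,\re\tfrac{\Gamma'}{\Gamma}(\tfrac14+\tfrac{iu}{2})\,\d u$ coming from the gamma factor (after combining the two $\Gamma'/\Gamma$ pieces on the critical line and using that $h$ is real on $\R$), (iii) the constant factor $-\tfrac{\log\pi}{2\pi}\widehat h(0)$, and (iv) an additional residue $-h(-1/(2i))$ picked up from the pole at $s=0$ of the shifted integrand (equivalently, from $1/(1-s)$ at $s=1$).

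Collecting every contribution and moving the two isolated residues $h(\pm 1/(2i))$ to the right-hand side gives exactly the stated identity. The main obstacle is the uniformity needed to justify the contour manipulations: one must choose the sequence $T_k\to\infty$ to avoid the zeros, verify that the horizontal integrals vanish using the standard $\log^2 T$ bound for $\zeta'/\zeta$ away from the zeros together with the decay hypothesis on $h$, and check that the shift of the right-hand vertical line from $\re s=3/2$ to $\re s=1/2$ (done to center the Fourier transform at $u \in \R$) does not cross any singularity. All of these are routine but require some care; once they are in place, the rest is algebraic bookkeeping.
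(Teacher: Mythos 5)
The paper itself gives no argument --- it simply cites Theorem 5.12 of Iwaniec--Kowalski, which is exactly the contour-integral proof you sketch. So your overall strategy is the standard one from the cited source, but there is one genuine gap and two smaller slips. The genuine gap: the rectangle $\{-\tfrac12\le \re s\le \tfrac32\}$ maps under $s\mapsto (s-\tfrac12)/i$ to the strip $|\im w|\le 1$, while the hypothesis only gives analyticity of $h$ in $|\im w|\le\tfrac12+\varepsilon$; if $\varepsilon<\tfrac12$, the integrand $h((s-\tfrac12)/i)$ need not even be defined on the vertical edges of your rectangle. The fix is to take $\{-\eta\le\re s\le 1+\eta\}$ with $0<\eta<\varepsilon$; on $\re s=1+\eta$ the Dirichlet series for $\zeta'/\zeta$ still converges absolutely, the functional equation is invoked on $\re s=-\eta$, and the isolated contributions $h(1/(2i))$ and $h(-1/(2i))$ arise from the pole of $\zeta'/\zeta$ at $s=1$ and from the pole one crosses (of the $\Gamma$-factor, equivalently of $\zeta'/\zeta(1-s)$ at $s=0$) when shifting the archimedean and prime-sum integrals down to the real $w$-axis.

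Two smaller points worth correcting. First, the logarithmic derivative of the functional equation reads
\begin{equation*}
\frac{\zeta'}{\zeta}(s)+\frac{\zeta'}{\zeta}(1-s)=\log\pi-\frac12\,\frac{\Gamma'}{\Gamma}\!\left(\frac{s}{2}\right)-\frac12\,\frac{\Gamma'}{\Gamma}\!\left(\frac{1-s}{2}\right),
\end{equation*}
with $+\log\pi$, not $-\log\pi$; the minus sign in the final $-\tfrac{1}{2\pi}\widehat h(0)\log\pi$ term comes from the overall sign of the left-edge contribution, and as you have it the sign would flip. Second, the phrase ``shift the line of integration from $\re s=\tfrac32$ to $\re s=\tfrac12$'' is not literally legal for $\zeta'/\zeta$ (one would cross nontrivial zeros); what is actually shifted, after interchanging the sum over $n$ with the integral, is each entire Fourier-type integral $\int h(w)\,n^{\mp iw}\,\dw$ from $\im w=\mp(\tfrac12+\eta)$ to $\im w=0$, term by term, which is where the $\widehat h(\pm\log n/2\pi)$ appear. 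With these repairs the argument is the one behind the cited theorem.
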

\begin{proof}
	The proof of this lemma follows from \cite[Theorem 5.12]{IK}. 
\end{proof}

The existence and qualitative description of the appropriate majorants and minorants of exponential type for $f_{2m+1}$ will come from the general machinery developed by Carneiro, Littmann and Vaaler \cite{CLV} to solve the Beurling-Selberg extremal problem for a class of even functions subordinated to the Gaussian. We collect the relevant properties for our purposes in the next lemma, that shall be proved in Section \ref{Extremal_functions_section}. This lemma is the generalization of \cite[Lemma 4]{CCM} that considers the case $m=0$.

\begin{lemma}[Extremal functions]\label{lema extremal} Let $m \geq0$ be an integer and let $\Delta\geq1$ be a real parameter. Let $f_{2m+1}$ be the real valued function defined in \eqref{Def_f_2m+1}, i.e. 
	\[
	f_{2m+1}(x)=\dfrac{1}{(2m+1)}\left[(-1)^{m+1}x^{2m+1}\arctan\left(\frac{1}{x}\right) + \sum_{k=0}^{m}\dfrac{(-1)^{m-k}}{2k+1}x^{2m-2k}\right].
	\]
Then there are unique real entire functions $g_{2m+1,\Delta}^{-}:\mathbb{C}\to\mathbb{C}$ and $g_{2m+1,\Delta}^{+}:\mathbb{C}\to\mathbb{C}$ satisfying the following properties:
\begin{itemize}
\item[(i)] For $x\in\mathbb{R}$ we have
	\begin{align}\label{EF_Lem_eq1}
	-\dfrac{K_{2m+1}}{1+x^2}\leq g_{2m+1,\Delta}^{-}(x) \leq f_{2m+1}(x) \leq g_{2m+1,\Delta}^{+}(x) \leq \dfrac{K_{2m+1}}{1+x^2}\,, 
	\end{align}
for some positive constant $K_{2m+1}$ independent of $\Delta$. Moreover, for any complex number $z=x+iy$ we have
\begin{align}\label{EF_Lem_eq2}
		\big|g_{2m+1,\Delta}^{\pm}(z)\big|\ll_m \dfrac{\Delta^{2}}{(1+\Delta|z|)}e^{2\pi\Delta|y|}.   
\end{align}

\smallskip

\item[(ii)] The Fourier transforms of $g_{2m+1,\Delta}^{\pm}$, namely
	\[
	\widehat{g}_{2m+1,\Delta}^{\pm}(\xi)=\int_{-\infty}^{\infty}g_{2m+1,\Delta}^{\pm}(x)\,e^{-2\pi ix\xi}\,\dx,
	\]
	are continuous functions supported on the interval $[-\Delta,\Delta]$ and satisfy
	\begin{equation}\label{FT_unif_bounded}
	\widehat{g}_{2m+1,\Delta}^{\pm}(\xi) \ll_m 1
	\end{equation}
	for all $\xi\in[-\Delta,\Delta]$, where the implied constant is independent of $\Delta$.
	
	\smallskip
	
\item[(iii)] The $L^1-$distances of $g_{2m+1,\Delta}^{\pm}$ to $f_{2m+1}$ are explicitly given by
	\begin{equation}\label{EF_Lem_eq4}
	\int_{-\infty}^{\infty}\big\{f_{2m+1}(x)-g^{-}_{2m+1,\Delta}(x)\big\}\,\dx=\dfrac{1}{\Delta}\int_{1/2}^{3/2}\left(\sigma-\tfrac{1}{2}\right)^{2m}\,\log\left(\dfrac{1+e^{-2\pi(\sigma-1/2)\Delta}}{1+e^{-2\pi\Delta}}\right)\d\sigma
	\end{equation}
	and
	\begin{equation}\label{EF_Lem_eq5}
	\int_{-\infty}^{\infty}\big\{g^{+}_{2m+1,\Delta}(x)-f_{2m+1}(x)\big\}\,\dx=
	-\dfrac{1}{\Delta}\int_{1/2}^{3/2}\left(\sigma-\tfrac{1}{2}\right)^{2m}\,\log\left(\dfrac{1-e^{-2\pi(\sigma-1/2)\Delta}}{1-e^{-2\pi\Delta}}\right)\d\sigma.
	\end{equation}
\end{itemize}
\end{lemma}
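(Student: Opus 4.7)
The strategy is to build $g^{\pm}_{2m+1,\Delta}$ by superposing, over an auxiliary parameter $\sigma\in(0,1]$, the Beurling--Selberg extremal pairs attached to the family of log-type building blocks
\[
\ell_{\sigma}(x) \;=\; \tfrac{1}{2}\log\!\left(\dfrac{1+x^2}{\sigma^2+x^2}\right).
\]
By Frullani, $\ell_{\sigma}(x)=\tfrac{1}{2}\int_{0}^{\infty}(e^{-\sigma^{2}t}-e^{-t})\,e^{-tx^2}\,\dt/t$, exhibiting each $\ell_\sigma$ as a non-negative superposition of Gaussians, so each $\ell_\sigma$ falls exactly in the class covered by the Gaussian-subordination framework of \cite{CLV}. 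Moreover, the representation
\[
f_{2m+1}(x) \;=\; \int_{0}^{1}\sigma^{2m}\,\ell_{\sigma}(x)\,\d\sigma,
\]
which was already derived in the proof of Lemma \ref{Rep_lem} (see \eqref{definition_f_2m+1_2} and the subsequent computation), displays $f_{2m+1}$ itself as a positive-weight superposition of these building blocks. From \cite{CLV} I would extract, for each $\sigma\in(0,1]$ and each $\Delta\geq 1$, a unique pair of real entire functions $L^{\pm}_{\sigma,\Delta}$ of exponential type at most $2\pi\Delta$ enjoying, uniformly in $\sigma$, the sandwich $L^{-}_{\sigma,\Delta}(x)\leq \ell_{\sigma}(x)\leq L^{+}_{\sigma,\Delta}(x)$ on $\R$, the complex-analytic decay $|L^{\pm}_{\sigma,\Delta}(z)|\ll \Delta^{2}(1+\Delta|z|)^{-1}e^{2\pi\Delta|y|}$ on $\C$, Fourier transforms supported in $[-\Delta,\Delta]$ with $|\widehat{L}^{\pm}_{\sigma,\Delta}(\xi)|\ll 1$, and the $L^1$-error
\[
\int_{\R}\bigl\{\ell_{\sigma}(x)-L^{-}_{\sigma,\Delta}(x)\bigr\}\,\dx \;=\; \dfrac{1}{\Delta}\log\!\left(\dfrac{1+e^{-2\pi\sigma\Delta}}{1+e^{-2\pi\Delta}}\right),
\]
together with the analogous (sign-reversed) formula for the majorant.

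I would then set
\[
g^{\pm}_{2m+1,\Delta}(z) \;=\; \int_{0}^{1}\sigma^{2m}\,L^{\pm}_{\sigma,\Delta}(z)\,\d\sigma,
\]
and verify properties (i)--(iii) by integrating the per-$\sigma$ assertions against the positive weight $\sigma^{2m}\,\d\sigma$. Linearity delivers the pointwise sandwich \eqref{EF_Lem_eq1} and the decay bound $|g^{\pm}(x)|\leq K_{2m+1}/(1+x^2)$, with $K_{2m+1}$ obtained by multiplying the uniform-in-$\sigma$ constant by $\int_{0}^{1}\sigma^{2m}\,\d\sigma=1/(2m+1)$. The complex-analytic bound \eqref{EF_Lem_eq2}, the compact Fourier support in $[-\Delta,\Delta]$, and the uniform bound \eqref{FT_unif_bounded} all transfer from the per-$\sigma$ versions since $\widehat{g}^{\pm}_{2m+1,\Delta}(\xi)=\int_{0}^{1}\sigma^{2m}\,\widehat{L}^{\pm}_{\sigma,\Delta}(\xi)\,\d\sigma$. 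For (iii), Fubini combined with the change of variable $\sigma\mapsto\sigma-\tfrac{1}{2}$ (sending $(0,1)$ to $(\tfrac{1}{2},\tfrac{3}{2})$) converts the per-$\sigma$ error identity into precisely \eqref{EF_Lem_eq4}, and the same argument applied to the majorant yields \eqref{EF_Lem_eq5}.

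The main technical point will be confirming that the per-$\sigma$ construction in \cite{CLV} produces estimates with constants that are genuinely uniform in $\sigma\in(0,1]$, in particular in the boundary regime $\sigma\to 0^{+}$ where $\ell_{\sigma}(0)=-\log\sigma$ diverges; the weight $\sigma^{2m}$ is integrable on $(0,1)$ for $m\geq 0$ and absorbs any boundary blow-up, and the template for this uniformity is already present in \cite[Lemma~4]{CCM} for the case $m=0$. Uniqueness of the pair $g^{\pm}_{2m+1,\Delta}$ is then inherited from the layer-wise uniqueness of each $L^{\pm}_{\sigma,\Delta}$: any admissible competitor $\tilde{g}^{\pm}$ of exponential type at most $2\pi\Delta$ realizing the same $L^1$-error from $f_{2m+1}$ must, by the non-negativity of $f_{2m+1}-\tilde{g}^{-}$ and $\tilde{g}^{+}-f_{2m+1}$ and the integrated sandwich, coincide with $g^{\pm}_{2m+1,\Delta}$.
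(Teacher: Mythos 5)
Your proposal is in essence the same route the paper takes: both invoke the Gaussian-subordination machinery of \cite{CLV}, whose applicability rests on the representation $f_{2m+1}(x)=\int_0^1\sigma^{2m}\,\ell_\sigma(x)\,\d\sigma$ coming from \eqref{definition_f_2m+1_2}, and whose key feature is that the extremal majorant/minorant for a positive superposition of Gaussians is the corresponding superposition of the slice-wise extremals. The paper packages the whole thing at once: it defines the finite positive Borel measure $\nu_\Delta$ obtained by integrating the Frullani representation of each $\ell_\sigma$ against $\sigma^{2m}\,\d\sigma$, sets $F_\Delta(x)=\int_0^\infty e^{-\pi\lambda x^2}\,\d\nu_\Delta(\lambda)$ with $f_{2m+1}(x)=F_\Delta(\Delta x)$, and applies \cite[Corollary 17]{CLV} once to $F_\Delta$ to get the explicit interpolation series \eqref{Def_G-}--\eqref{Def_G+}, from which \eqref{EF_Lem_eq1}--\eqref{EF_Lem_eq5} are read off. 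Your version slices by $\sigma$ first and integrates afterward; that is mathematically the same by Fubini, but it is cleaner to pass to the single measure, for two reasons you should be aware of.

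First, the slice-wise claim you state, namely that $|L^\pm_{\sigma,\Delta}(z)|\ll \Delta^2(1+\Delta|z|)^{-1}e^{2\pi\Delta|y|}$ holds ``uniformly in $\sigma$'', is false: $L^+_{\sigma,\Delta}(0)\geq\ell_\sigma(0)=-\log\sigma$ blows up as $\sigma\to0^+$, so the implied constant in any such per-$\sigma$ bound necessarily grows like $\log(e/\sigma)$. You almost say this yourself in the next paragraph and correctly note that $\sigma^{2m}\log(e/\sigma)$ is integrable on $(0,1)$, so the integrated functions $g^\pm_{2m+1,\Delta}$ do satisfy \eqref{EF_Lem_eq1} and \eqref{EF_Lem_eq2}; but as written your proof contains the contradictory assertion of a uniform-in-$\sigma$ constant. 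The paper avoids this entirely by bounding $F_\Delta$ and $F'_\Delta$ directly (using the power-series bounds for $f_{2m+1}$ and $f'_{2m+1}$, see \eqref{bounds for big F}) and never talking about individual slices at all. Second, the uniqueness step needs more than ``inherited from layer-wise uniqueness'': the correct argument is the standard quadrature/interpolation one through \cite[Corollary 17]{CLV}, which identifies the extremal minorant (resp.\ majorant) of $F_\Delta$ as the unique exponential-type-$2\pi$ function interpolating $F_\Delta$ and $F'_\Delta$ at the half-integers (resp.\ integers); this is exactly what \eqref{Def_G-}--\eqref{Def_G+} encode. Neither issue is fatal, and your overall plan would converge to the paper's proof once these are tightened; I only flag them because they are precisely the places where the single-measure formulation in the paper is doing real work.
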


\subsection{Proof of Theorem \ref{Thm1} for $n$ odd} Let $n = 2m+1$. To simplify notation we disregard one of the subscripts and write $g^{\pm}_{\Delta}(z):=g^{\pm}_{2m+1,\Delta}(z)$. For a fixed $t >0$, we consider the functions $h^{\pm}_{\Delta}(z):=g^{\pm}_{\Delta}(t-z)$. Then $\widehat{h}^{\pm}_{\Delta}(\xi)=\widehat{g}^{\pm}_{\Delta}(-\xi)e^{-2\pi i\xi t}$ and the condition $|h^{\pm}_{\Delta}(s)|\ll(1+|s|)^{-2}$ when $|\re{s}|\to\infty$ in the strip $|\im{s}|\leq 1$ follows from \eqref{EF_Lem_eq1}, \eqref{EF_Lem_eq2} and an application of the Phragm\'{e}n-Lindel\"{o}f principle. We can then apply the Guinand-Weil explicit formula (Lemma \ref{GW}) to get
\begin{align}\label{GW_applied_to_h}
\begin{split}
\displaystyle\sum_{\gamma}g^{\pm}_{\Delta}(t-\gamma) & = 
\Big\{g^{\pm}_{\Delta}\left(t-\tfrac{1}{2i}\right)+g^{\pm}_{\Delta}\left(t+\tfrac{1}{2i}\right)\Big\}-\dfrac{1}{2\pi}\widehat{g}^{\pm}_{\Delta}(0)\log\pi \\ 
     & + \dfrac{1}{2\pi}\int_{-\infty}^{\infty}g^{\pm}_{\Delta}(t-x)\,\re\,\dfrac{\Gamma'}{\Gamma}\bigg(\dfrac{1}{4}+\dfrac{ix}{2}\bigg) \,\dx \\ 
     & - \dfrac{1}{2\pi}\displaystyle\sum_{n\geq 2}\dfrac{\Lambda(n)}{\sqrt{n}}\,\left\{ \widehat{g}^{\pm}_{\Delta}\left(-\dfrac{\log n}{2\pi}\right)\,e^{-it\log n}+\widehat{g}^{\pm}_{\Delta}\left(\dfrac{\log n}{2\pi}\right)\,e^{it\log n}\right\}. 
\end{split}
\end{align}
\subsubsection{Asymptotic analysis} We now analyze each term on the right-hand side of \eqref{GW_applied_to_h} separately.

\smallskip

\noindent 1. {\it First term}: From \eqref{EF_Lem_eq2} we get
\begin{align}\label{AsA_eq1}
\Big|g^{\pm}_{\Delta}\left(t-\tfrac{1}{2i}\right)+g^{\pm}_{\Delta}\left(t+\tfrac{1}{2i}\right)\Big| \ll_m \,\Delta^2\dfrac{e^{\pi\Delta}}{1+\Delta t}. 
\end{align}

\smallskip

\noindent 2. {\it Second term}: From \eqref{FT_unif_bounded} we get
\begin{align}\label{AsA_eq2}
\left|\frac{1}{2\pi}\widehat{g}^{\pm}_{\Delta}(0)\log\pi\right|\ll_m 1.  
\end{align} 

\smallskip

\noindent 3. {\it Fourth term}: Recall that the Fourier transforms $\widehat{g}^{\pm}_{\Delta}$ are supported on the interval $[-\Delta,\Delta]$. Using \eqref{FT_unif_bounded}, summation by parts and the Prime Number Theorem we obtain
\begin{align}\label{AsA_eq3}
\Bigg|\dfrac{1}{2\pi}\displaystyle\sum_{n\geq 2}\dfrac{\Lambda(n)}{\sqrt{n}}\,\left\{ \widehat{g}^{\pm}_{\Delta}\left(-\dfrac{\log n}{2\pi}\right)\,e^{-it\log n}+\widehat{g}^{\pm}_{\Delta}\left(\dfrac{\log n}{2\pi}\right)\,e^{it\log n}\right\}\Bigg|\ll_m \displaystyle\sum_{n\leq e^{2\pi\Delta}}\dfrac{\Lambda(n)}{\sqrt{n}}\ll_m \,e^{\pi\Delta}.
\end{align}

\smallskip

\noindent 4. {\it Third term}: This is the term that requires most of our attention. Using \eqref{definition_f_2m+1_2} and \cite[2.733 - Formula 1]{GR} we start by observing that 
\begin{align}\label{Ev_int_f_2m+1}
\begin{split}
\int_{-\infty}^{\infty}f_{2m+1}(x)\,\dx & = \dfrac{1}{2}\int_{-\infty}^{\infty}\int_{0}^{1}\sigma^{2m}\log\Bigg(\dfrac{1+x^2}{\sigma^2+x^2}\Bigg) \,\d\sigma \,\dx  \\
&  =
\dfrac{1}{2}\int_{0}^{1}{\sigma^{2m}\int_{-\infty}^{\infty}\log\Bigg(\dfrac{1+x^2}{\sigma^2+x^2}\Bigg)} \, \dx\,\d\sigma \\
& =  \dfrac{1}{2}\int_{0}^{1}\sigma^{2m}\bigg[x\log\bigg(\dfrac{1+ x^2}{\sigma^2 + x^2}\bigg)+2\arctan(x)-2\sigma\arctan\Big(\dfrac{x}{\sigma}\Big)\bigg]\Bigg|^{\infty}_{-\infty} \, \d\sigma  \\
& = \pi \int_{0}^{1}\sigma^{2m}(1-\sigma)\,\d\sigma \\
& = \dfrac{\pi}{(2m+1)(2m+2)}.
\end{split} 
\end{align}
Let us assume without loss of generality that $t \geq 10$. Using Stirling's formula for $\Gamma'/\Gamma$ (cf. \cite[Chapter 10]{Dav}), together with \eqref{EF_Lem_eq1}, \eqref{EF_Lem_eq4}, \eqref{EF_Lem_eq5} and \eqref{Ev_int_f_2m+1}, we get
\begin{align}\label{Final_eq0}
\begin{split}
& \dfrac{1}{2\pi}\int_{-\infty}^{\infty}g^{\pm}_{\Delta}(t-x)\,\re\,\dfrac{\Gamma'}{\Gamma}\bigg(\dfrac{1}{4}+\dfrac{ix}{2}\bigg) \,\dx \\
& = \dfrac{1}{2\pi}\int_{-\infty}^{\infty}g^{\pm}_{\Delta}(x)\big(\log t + O(\log(2+|x|))) \,\dx \\
& = \dfrac{1}{2\pi}\int_{-\infty}^{\infty}\Big\{f_{2m+1}(x)-\big(f_{2m+1}(x)-g^{\pm}_{\Delta}(x)\big)\Big\}\big(\log t + O(\log(2+|x|))) \,\dx  \\
& = \dfrac{\log t}{2(2m+1)(2m+2)}-
\dfrac{\log t}{2\pi\Delta}\int_{1/2}^{3/2}\left(\sigma-\tfrac{1}{2}\right)^{2m}\log\Bigg(\dfrac{1\mp e^{-2\pi(\sigma-1/2)\Delta}}{1\mp e^{-2\pi\Delta}}\Bigg)\d\sigma + O(1)  \\
& = \dfrac{\log t}{2(2m+1)(2m+2)}-
\dfrac{\log t}{2\pi\Delta}\int_{1/2}^{\infty}\left(\sigma-\tfrac{1}{2}\right)^{2m}\log\left(1\mp e^{-2\pi(\sigma-1/2)\Delta}\right)\d\sigma +  O\left(e^{-\pi\Delta}\log t\right) + O(1). 
\end{split}
\end{align}
We evaluate this last integral expanding $\log(1\mp x)$ into a power series:
\begin{align*}
\int_{1/2}^{\infty} \left(\sigma-\tfrac{1}{2}\right)^{2m}  & \log\left(1  \mp e^{-2\pi(\sigma-1/2)\Delta}\right)\d\sigma  = \int_{0}^{\infty} \sigma^{2m}\log\Big(1\mp e^{-2\pi\sigma\Delta}\Big)\d\sigma \\
& = \int_{0}^{\infty} \sigma^{2m} \sum_{k\geq 0} \left\{ \mp \frac{e^{-2\pi\sigma\Delta(2k+1)}}{2k+1} -\frac{e^{-2\pi\sigma\Delta(2k+2)}}{2k+2}\right\} \d\sigma\\
& = \sum_{k\geq 0}\int_{0}^{\infty} \sigma^{2m}\left\{ \mp \frac{e^{-2\pi\sigma\Delta(2k+1)}}{2k+1} -\frac{e^{-2\pi\sigma\Delta(2k+2)}}{2k+2}\right\} \d\sigma\\
& =\frac{(2m)!}{(2\pi \Delta)^{2m+1}} \sum_{k\geq 0} \left\{\mp \frac{1}{(2k+1)^{2m+2}} - \frac{1}{(2k+2)^{2m+2}}\right\}.
\end{align*}
The interchange between integral and sum above is guaranteed by the monotone convergence theorem since all terms involved have the same sign. We have thus arrived at the following two expressions:
\begin{align} \label{ASA_eq4}
\begin{split}
\dfrac{1}{2\pi}\int_{-\infty}^{\infty} & g^{+}_{\Delta}(t-x)\, \re\,\dfrac{\Gamma'}{\Gamma}\left(\dfrac{1}{4}+\dfrac{ix}{2}\right) \,\dx \\
&  = \dfrac{\log t}{2(2m+1)(2m+2)} + 
\dfrac{(2m)! \,\zeta(2m+2)}{(2\pi\Delta)^{2m+2}} \log t +  O\left(e^{-\pi\Delta}\log t\right) + O(1) 
\end{split}
\end{align}
and 
\begin{align}\label{ASA_eq5}
\begin{split}
\dfrac{1}{2\pi}\int_{-\infty}^{\infty}& g^{-}_{\Delta}(t-x)\, \re\,\dfrac{\Gamma'}{\Gamma}\bigg(\dfrac{1}{4}+\dfrac{ix}{2}\bigg) \,\dx\\
&   =  \dfrac{\log t}{2(2m+1)(2m+2)} - 
\dfrac{(2m)! \,\left( 1 - 2^{-2m-1}\right)\zeta(2m+2)}{(2\pi\Delta)^{2m+2}} \log t+  O\left(e^{-\pi\Delta}\log t\right) + O(1).
\end{split}
\end{align}

\smallskip

\subsubsection{Conclusion of the proof} Recall that $n=2m+1$. We now consider two cases:

\smallskip

\noindent \underline{{\it Case 1}: $m$ even.}

\smallskip

In this case, by \eqref{Lem2_eq_representation_odd} we have
\begin{equation*}
S_{2m+1}(t)=\frac{1}{2\pi(2m+2)!}\log t -\frac{1}{\pi(2m)!} \sum_{\gamma}f_{2m+1}(t-\gamma) \,+ \,O(1).
\end{equation*}
Using \eqref{EF_Lem_eq1} we arrive at
\begin{align*}
\frac{1}{2\pi(2m+2)!}\log t -\frac{1}{\pi(2m)!} & \sum_{\gamma}g_{2m+1, 
\Delta}^+(t-\gamma)  \,+ \,O(1) \\ 
& \leq S_{2m+1}(t) \\
&  \leq \frac{1}{2\pi(2m+2)!}\log t -\frac{1}{\pi(2m)!} \sum_{\gamma}g_{2m+1, 
\Delta}^-(t-\gamma) \,+ \,O(1).
\end{align*}
From \eqref{GW_applied_to_h}, \eqref{AsA_eq1}, \eqref{AsA_eq2}, \eqref{AsA_eq3}, \eqref{ASA_eq4} and \eqref{ASA_eq5} we find
\begin{align}\label{Concl_pf_thm_1_n_odd}
\begin{split}
 -\frac{\zeta(2m+2)}{\pi (2 \pi \Delta)^{2m+2}} \log t & + O \left(\tfrac{\Delta^2\,e^{\pi\Delta}}{1+\Delta t}\right) + O\left(e^{-\pi\Delta}\log t\right) + O\big(e^{\pi \Delta} + 1\big)\\
& \leq S_{2m+1}(t)\\
 & \leq \dfrac{\left( 1 - 2^{-2m-1}\right)\zeta(2m+2)}{\pi (2\pi\Delta)^{2m+2}} \log t +  O \left(\tfrac{\Delta^2\,e^{\pi\Delta}}{1+\Delta t}\right) + O\left(e^{-\pi\Delta}\log t\right) + O\big(e^{\pi \Delta} + 1\big).
 \end{split}
\end{align}
Choosing 
$$\pi \Delta = \log \log t - (2m+3) \log \log \log t$$ 
in \eqref{Concl_pf_thm_1_n_odd} we obtain
\begin{align*}
-\left(\frac{\zeta(2m +2)}{\pi \cdot 2^{2m+2}} + o(1) \right) \frac{\log t}{(\log \log t)^{2m+2}}  \ \leq \ S_{2m+1}(t)\ \leq \ \left(\frac{\left( 1 - 2^{-2m-1}\right)\zeta(2m+2)}{\pi \cdot 2^{2m+2}} +o(1)\right) \frac{\log t}{(\log \log t)^{2m+2}},\end{align*} 
where the terms $o(1)$ above are $O(\log \log \log t / \log \log t)$.

\smallskip

\noindent \underline{{\it Case 2}: $m$ odd.}

\smallskip

 Using \eqref{Lem2_eq_representation_odd} we get
\[
S_{2m+1}(t)=\dfrac{-1}{2\pi(2m+2)!}\log t+\dfrac{1}{\pi(2m)!} \displaystyle\sum_{\gamma}f_{2m+1}(t-\gamma) + O(1),
\]
and we only need to interchange the roles of $g_{\Delta}^{+}$ and $g_{\Delta}^{-}$ in comparison to the previous case. Similar calculations show that  
\begin{align*}
-\left(C_{2m+1}^{-} + o(1) \right)\dfrac{\log t}{(\log\log t)^{2m+2}} \ \leq \ S_{2m+1}(t)\ \leq \ \left(C_{2m+1}^{+} + o(1) \right)\dfrac{\log t}{(\log\log t)^{2m+2}},
\end{align*}
where the terms $o(1)$ above are $O(\log \log \log t / \log \log t)$ and 
$$
C_{2m+1}^{-}=\dfrac{\left( 1 - 2^{-2m-1}\right)\zeta(2m+2)}{\pi \cdot 2^{2m+2}}\ \ \ {\rm and} \ \ \ C_{2m+1}^{+}=\dfrac{\zeta(2m+2)}{\pi \cdot 2^{2m+2}}.
$$

This completes the proof of Theorem \ref{Thm1} for $n$ odd.

\section{An interpolation argument: proof of Theorem \ref{Thm1} in the case of $n$ even} \label{Int_sec}
In order to further simplify the notation let us write
\[
\ell_{n}(t):=\dfrac{\log t}{(\log\log t)^{n}} \hspace{0.5cm} \mbox{and} \hspace{0.5cm} r_{n}(t):=\dfrac{\log t\log\log\log t}{(\log\log t)^{n}}.
\]
Let $n \geq 2$ be an even integer (the case $n=0$ was established in \cite{CCM}). We have already shown that 
\begin{align}
-C_{n-1}^{-}\ell_{n}(t) + O(r_{n+1}(t)) \leq  S_{n-1}(t) \leq C_{n-1}^{+}\ell_{n}(t) + O(r_{n+1}(t)) \label{interpol_1} 
\end{align}
and
\begin{align}
-C_{n+1}^{-}\ell_{n+2}(t) + O(r_{n+3}(t)) \leq  S_{n+1}(t) \leq C_{n+1}^{+}\ell_{n+2}(t) + O(r_{n+3}(t)) \label{interpol_2}.
\end{align}
Our goal now is to obtain a similar estimate for $S_n(t)$ that interpolates between \eqref{interpol_1} and \eqref{interpol_2}. We view this as a pure analysis problem and our argument below explores the fact that the function $S_n(t)$, for $n\geq 2$, is continuously differentiable.

\medskip

By the mean value theorem and \eqref{interpol_1} we obtain, for $-\sqrt{t} \leq h \leq \sqrt{t}$, 
\begin{align}\label{prep_integral}
\begin{split}
S_{n}(t) - S_{n}(t-h) & = h \, S_{n-1}(t_h^*) \\
& \leq \left(\chi_{h>0}\,|h|\,C_{n-1}^{+}+\chi_{h<0}\,|h|\,C_{n-1}^{-}\right)\ell_{n}(t^{*}_h)+|h|\,O(r_{n+1}(t^{*}_h))\\
& \leq \left(\chi_{h>0}\,|h|\,C_{n-1}^{+}+\chi_{h<0}\,|h|\,C_{n-1}^{-}\right)\ell_{n}(t)+|h| \,O(r_{n+1}(t))\,,
\end{split}
\end{align}
where $t^*_h$ is a suitable point in the segment connecting $t-h$ and $t$, and $\chi_{h>0}$ and $\chi_{h<0}$ are the indicator functions of the sets $\{h \in \R; \,h>0\}$ and $\{h \in \R; \,h<0\}$, respectively.

\medskip

Let $a$ and $b$ be positive real numbers that shall be properly chosen later (in particular, we will be able to choose them in a way that $a + b =1$, for instance). Let $\nu$ be a real parameter such that $0 < \nu \leq \sqrt{t}$. We integrate \eqref{prep_integral} with respect to the variable $h$ to get
\begin{align*}
S_{n}(t) & \leq \frac{1}{(a+b)\nu} \int_{-a\nu}^{b\nu} S_{n}(t-h)\,\d h \  + \frac{1}{(a+b)\nu} \left[ \int_{-a\nu}^{b\nu}\left(\chi_{h>0}\,|h|\,C_{n-1}^{+}+\chi_{h<0}\,|h|\,C_{n-1}^{-}\right)\,\d h\right] \ell_{n}(t)\\
&  \ \ \ \ \ \ \ \ \ \ \ \ \ \ \ + \frac{1}{(a+b)\nu} \left[\int_{-a\nu}^{b\nu} |h|\,\d h \right] O(r_{n+1}(t))\\
& = \frac{1}{(a+b)\nu} \Big[ S_{n+1}(t + a\nu) - S_{n+1}(t - b\nu)\Big] + \left[\dfrac{b^2C_{n-1}^{+} + a^{2}C_{n-1}^{-}}{2(a+b)}\right]\nu \,\ell_{n}(t) + O(\nu \,r_{n+1}(t)).
\end{align*}
We now use \eqref{interpol_2} to get
\begin{align}\label{To_be_optimized}
\begin{split}
S_{n}(t) & \leq \frac{1}{(a+b)\nu} \Big[ C_{n+1}^{+}\ell_{n+2}(t+a\nu) + C_{n+1}^{-}\ell_{n+2}(t-b\nu) + O(r_{n+3}(t+a\nu)) + O(r_{n+3}(t-b\nu))\Big] \\
&  \ \ \ \ \ \ \ \ \ + \left[\dfrac{b^2C_{n-1}^{+} + a^{2}C_{n-1}^{-}}{2(a+b)}\right]\nu \,\ell_{n}(t) + O(\nu \,r_{n+1}(t))\\
& = \left[\frac{C_{n+1}^{+} + C_{n+1}^{-}}{(a+b)}\right] \frac{1}{\nu}\,\,\ell_{n+2}(t)+ \left[\dfrac{b^2C_{n-1}^{+} + a^{2}C_{n-1}^{-}}{2(a+b)}\right]\nu \,\ell_{n}(t) + O\left(\frac{r_{n+3}(t)}{\nu}\right) + O(\nu \,r_{n+1}(t)).
\end{split}
\end{align}

\medskip

Choosing $\nu = \frac{\alpha}{\log \log t}$ in \eqref{To_be_optimized}, where $\alpha>0$ is a constant to be determined, we find
\begin{align*}
S_{n}(t) \leq \left\{ \left[\frac{C_{n+1}^{+} + C_{n+1}^{-}}{(a+b)}\right] \frac{1}{\alpha} + \left[\dfrac{b^2C_{n-1}^{+} + a^{2}C_{n-1}^{-}}{2(a+b)}\right]\,\alpha\right\} \,\ell_{n+1}(t) + O(r_{n+2}(t)).
\end{align*}
We now choose $\alpha>0$ to minimize the expression in brackets, which corresponds to the choice
$$\alpha = \left[\frac{C_{n+1}^{+} + C_{n+1}^{-}}{(a+b)}\right]^{1/2} \left[\dfrac{b^2C_{n-1}^{+} + a^{2}C_{n-1}^{-}}{2(a+b)}\right]^{-1/2}.$$
This leads to the bound
\begin{align}\label{opt_prob_2}
S_n(t)& \leq 2  \left[\frac{\big(C_{n+1}^{+} + C_{n+1}^{-}\big)  \big( b^2C_{n-1}^{+} + a^{2}C_{n-1}^{-}\big)}{2(a+b)^2}\right]^{1/2} \ell_{n+1}(t) + O(r_{n+2}(t)).
\end{align}

\medskip

We now seek to minimize the right-hand side of \eqref{opt_prob_2} in the variables $a$ and $b$. It is easy to see that it only depends on the ratio $a/b$ (and hence we can normalize to have $a + b = 1$). If we consider $a = bx$ we must minimize the function
$$H(x) = 2  \left[\frac{\big(C_{n+1}^{+} + C_{n+1}^{-}\big)  \big( C_{n-1}^{+} + x^{2}C_{n-1}^{-}\big)}{2(x+1)^2}\right]^{1/2}.$$
Note that $C_{n-1}^{\pm} >0$ and $C_{n+1}^{\pm} >0$. Such a minimum is obtained when $x = C_{n-1}^{+} / C_{n-1}^{-}$, leading to the bound
\begin{align*}
S_n(t)& \leq \left[\frac{2 \big(C_{n+1}^{+} + C_{n+1}^{-}\big) \ C_{n-1}^{+}\ C_{n-1}^{-}}{C_{n-1}^{+} + C_{n-1}^{-}}\right]^{1/2} \ell_{n+1}(t) + O(r_{n+2}(t)).
\end{align*}
The argument for the lower bound of $S_n(t)$ is entirely symmetric. This completes the proof of Theorem \ref{Thm1}.

\section{Extremal functions via Gaussian subordination}\label{Extremal_functions_section}

In this section we give a complete proof of Lemma \ref{lema extremal}.

\subsection{Preliminaries} The problem of finding one-sided approximations of real-valued functions by entire functions of prescribed exponential type, seeking to minimize the $L^1(\R)-$error, is a classical problem in approximation theory. This problem has its origins in the works of A. Beurling and A. Selberg, who constructed majorants and minorants of exponential type for the signum function and characteristic functions of intervals, respectively. The survey \cite{V} by J. D. Vaaler is the classical reference on the subject, describing some of the historical milestones of the problem and presenting a number of interesting applications of such special functions to analysis and number theory. Over the last years there has been considerable progress on the constructive side of such special functions (see for instance \cite{CL, CLV, CV2}, and the references therein, for the one-dimensional theory and \cite{CL2, CL3, HV} for multidimensional analogues), unveiling new applications to number theory, in particular to the theory of the Riemann zeta-function \cite{CC, CCLM, CCM, CCM2, CF, CS, G, GG}, as already mentioned in the introduction. 

\medskip

The appropriate machinery for our purposes is the Gaussian subordination framework of \cite{CLV}, a method that allows one to solve the Beurling-Selberg extremal problem for a wide class of even functions. In particular, functions $g:\mathbb{R}\to\mathbb{R}$ of the form
\[
g(x)=\int_{0}^{\infty}e^{-\pi\lambda x^2}\,\d\nu(\lambda),
\]
where $\nu$ is a finite nonnegative Borel measure on $(0,\infty)$, fall under the scope of \cite{CLV}. It turns out that our functions $f_{2m+1}$ defined in \eqref{Def_f_2m+1} are included in this class. 

\medskip

In fact, for $\Delta\geq 1$, we consider the nonnegative Borel measure $\nu_{\Delta} = \nu_{2m+1, \Delta} $ on $(0,\infty)$ given by
$$
\d\nu_{\Delta}(\lambda):=\int_{1/2}^{3/2}\left(\sigma-\tfrac{1}{2}\right)^{2m}\Bigg(\dfrac{e^{-\pi\lambda(\sigma-1/2)^{2} \Delta^2}-e^{-\pi\lambda\Delta^2}}{2\lambda}\Bigg)\,\d\sigma\, \d\lambda\,,
$$
and let $F_{\Delta}=F_{2m+1,\Delta}$ be the function 
\[
F_\Delta(x):=\int_{0}^{\infty}e^{-\pi\lambda x^2}\,\d\nu_{\Delta}(\lambda).
\]
Recall that 
\[
\dfrac{1}{2}\log\Bigg(\dfrac{x^2+\Delta^2}{x^2+(\sigma-1/2)^2\Delta^2}\Bigg)=\int_{0}^{\infty}e^{-\pi\lambda x^2}\Bigg(\dfrac{e^{-\pi\lambda(\sigma-1/2)^{2} \Delta^2}-e^{-\pi\lambda\Delta^2}}{2\lambda}\Bigg)\,\d\lambda.
\]
Multiplying both sides by $(\sigma-1/2)^{2m}$ and integrating from $\sigma=1/2$ to $\sigma=3/2$ yields
\begin{align*}
\dfrac{1}{2} \int_{1/2}^{3/2} &\left(\sigma-\tfrac{1}{2}\right)^{2m}\log\Bigg(\dfrac{x^2+\Delta^2}{x^2+(\sigma-1/2)^2\Delta^2}\Bigg)\,\d\sigma \\
& =  \int_{1/2}^{3/2}\int_{0}^{\infty}\left(\sigma-\tfrac{1}{2}\right)^{2m}e^{-\pi\lambda x^2}\Bigg(\dfrac{e^{-\pi\lambda(\sigma-1/2)^{2} \Delta^2}-e^{-\pi\lambda\Delta^2}}{2\lambda}\Bigg)\,\d\lambda \,\d\sigma \\
& =\int_{0}^{\infty}e^{-\pi\lambda x^2}\int_{1/2}^{3/2}\left(\sigma-\tfrac{1}{2}\right)^{2m}\Bigg(\dfrac{e^{-\pi\lambda(\sigma-1/2)^{2} \Delta^2}-e^{-\pi\lambda\Delta^2}}{2\lambda}\Bigg)\, \d\sigma \,\d\lambda\\
& =F_{\Delta}(x), 
\end{align*}
where the interchange of the integrals is justified since the terms involved are all nonnegative. It follows from \eqref{definition_f_2m+1_2} that 
\begin{align}\label{Eq_31_F}
f_{2m+1}(x)=F_{\Delta}(\Delta x).
\end{align}
In particular, this shows that the measure $\nu_{\Delta}$ is finite on $(0,\infty)$ since
\begin{align*}
\int_{0}^{\infty}\d\nu_{\Delta}(\lambda)=F_{\Delta}(0)=f_{2m+1}(0)=\dfrac{1}{(2m+1)^2}. 
\end{align*}

\medskip

By \cite[Corollary 17]{CLV}, there is a unique extremal minorant $G^{-}_{\Delta}(z)=G^{-}_{2m+1,\Delta}(z)$ and a unique extremal majorant $G^{+}_{\Delta}(z)=G^{+}_{2m+1,\Delta}(z)$ of exponential type\footnote{Recall that an entire function $G:\C \to \C$ is said to have exponential type $\tau$ if $\limsup_{|z| \to \infty} \frac{\log |G(z)|}{|z|} \leq \tau$.} \ $2\pi$ for $F_{\Delta}(x)$, and these functions are given by
\begin{align}\label{Def_G-}
G^{-}_{\Delta}(z)=\bigg(\dfrac{\cos\pi z}{\pi}\bigg)^2 \left\{\displaystyle\sum_{n=-\infty}^{\infty}\dfrac{F_{\Delta}\big(n-\frac{1}{2}\big)}{\big(z-n+\frac{1}{2}\big)^2}+\dfrac{F^{'}_{\Delta}\big(n-\frac{1}{2}\big)}{\big(z-n+\frac{1}{2}\big)}\right\}
\end{align}
and
\begin{align}\label{Def_G+}
G^{+}_{\Delta}(z)=\bigg(\dfrac{\sin\pi z}{\pi}\bigg)^2\left\{\displaystyle\sum_{n=-\infty}^{\infty}\dfrac{F_{\Delta}(n)}{(z-n)^2}+\displaystyle\sum_{n\neq 0}\dfrac{F^{'}_{\Delta}(n)}{(z-n)}\right\}.
\end{align}
Hence, the functions  $g^{-}_{\Delta}(z)=g^{-}_{2m+1,\Delta}(z)$ and $g^{+}_{\Delta}(z)=g^{+}_{2m+1,\Delta}(z)$ defined by
\begin{align}\label{Eq_50}
g^{-}_{\Delta}(z):=G^{-}_{\Delta}(\Delta z) \hspace{0.3cm} \mbox{and} \hspace{0.3cm} g^{+}_{\Delta}(z):=G^{+}_{\Delta}(\Delta z) 
\end{align} 
are the unique extremal functions of exponential type $2\pi\Delta$ for $f_{2m+1}$. We claim that these functions verify the conditions of Lemma \ref{lema extremal}.

\subsection{Proof of Lemma \ref{lema extremal}}

\subsubsection{Part (i)} We start by observing that 
\begin{align*}
\big|f_{2m+1}(x)\big| \ll_m \frac{1}{1+x^2}   \ \ \ \ {\rm and} \ \ \ \  \big|f'_{2m+1}(x)\big| \ll_m \frac{1}{|x|(1+x^2)}.
\end{align*}
This follows from the fact that $f_{2m+1}$ and $f'_{2m+1}$ are bounded functions with power series representations 
\begin{align*}
f_{2m+1}(x)=\frac{1}{2m+1}\sum_{k=1}^{\infty}\dfrac{(-1)^{k-1}}{(2k+2m+1)x^{2k}}  \ \ \ {\rm and} \ \ \ f'_{2m+1}(x)=\dfrac{1}{2m+1}\displaystyle\sum_{k=1}^{\infty}\dfrac{(-1)^{k}(2k)}{(2k+2m+1)x^{2k+1}}
\end{align*}
for $|x| >1$. It then follows from \eqref{Eq_31_F} that
\begin{align}\label{bounds for big F}
\big|F_{\Delta}(x)\big| \ll_m \frac{\Delta^2}{\Delta^2+x^2}   \ \ \ \ {\rm and} \ \ \ \  \big|F'_{\Delta}(x)\big| \ll_m \frac{\Delta^2}{|x|(\Delta^2+x^2)}.
\end{align}
Observe that for any complex number $z$ we have 
\begin{align}\label{Fejer_bound}
\left|\frac{\sin \pi z}{ \pi z}\right|^2 \ll \frac{e^{2\pi |\im z|}}{1 + |z|^2}.\end{align}
Expressions \eqref{Def_G-} and \eqref{Def_G+} can be rewritten as
\begin{equation}\label{G-_rewritten}
G^{-}_{\Delta}(z)= \displaystyle\sum_{n=-\infty}^{\infty} \bigg(\dfrac{\sin\pi (z - n + \tfrac12)}{\pi (z - n + \tfrac12)}\bigg)^2  \left\{F_{\Delta}\big(n-\tfrac{1}{2}\big) + (z-n+\tfrac{1}{2}\big)F^{'}_{\Delta}\big(n-\tfrac{1}{2}\big)\right\}
\end{equation}
and 
\begin{equation}\label{G+_rewritten}
G^{+}_{\Delta}(z)= \bigg(\dfrac{\sin\pi z }{\pi z }\bigg)^2  F_{\Delta}(0) + \displaystyle\sum_{n\neq 0}\bigg(\dfrac{\sin\pi (z - n)}{\pi (z - n)}\bigg)^2  \left\{F_{\Delta}(n) + (z-n)F^{'}_{\Delta}(n)\right\}.
\end{equation}
It follows from \eqref{bounds for big F}, \eqref{Fejer_bound}, \eqref{G-_rewritten} and \eqref{G+_rewritten} that
\begin{align*}
\left|G^{\pm}_{\Delta}(z)\right| \ll_m \frac{\Delta^2}{1 + |z|}\,e^{2 \pi |\im z|}
\end{align*}
and from \eqref{Eq_50} this implies \eqref{EF_Lem_eq2}.

\medskip

To bound $G^{\pm}_{\Delta}$ on the real line, we explore the fact that $F_{\Delta}$ is an even function (and hence $F_{\Delta}'$ is odd) to group the terms conveniently. For the majorant we group the terms $n$ and $-n$ in \eqref{G+_rewritten} to get
\begin{align}\label{G^+_bound_real}
G^{+}_{\Delta}(x) & =\bigg(\dfrac{\sin\pi x}{\pi x}\bigg)^2 F_{\Delta}(0) +  \displaystyle\sum_{n=1}^{\infty} \bigg(\dfrac{\sin^2\pi (x - n)}{\pi^2 (x^2 - n^2)^2}\bigg) \Big\{(2x^2 + 2n^2)F_{\Delta}(n)  + (x^2 - n^2) \,2 n\, F^{'}_{\Delta}(n)\Big\}\,,
\end{align}
and it follows from \eqref{bounds for big F} and \eqref{Fejer_bound} that 
\begin{align}\label{Pf_Lem5_eq_1_g+}
\big|G^{+}_{\Delta}(x)\big| \ll_m \frac{\Delta^2}{\Delta^2 + x^2}. 
\end{align}
It may be useful to split the sum in \eqref{G^+_bound_real} into the ranges $\{n \leq |x|/2\}$, $\{|x|/2 < n \leq 2|x|\}$ and $\{2|x| < n\}$ to verify this last claim. The bound
\begin{align}\label{Pf_Lem5_eq_1_g-}
\big|G^{-}_{\Delta}(x)\big| \ll_m \frac{\Delta^2}{\Delta^2 + x^2}. 
\end{align} 
follows in an analogous way, grouping the terms $n$ and $1-n$ (for $n \geq 1$) in \eqref{G-_rewritten}. From \eqref{Eq_50}, \eqref{Pf_Lem5_eq_1_g+} and \eqref{Pf_Lem5_eq_1_g-} we arrive at \eqref{EF_Lem_eq1}.

\subsubsection{Part (ii)} From the inequalities \eqref{EF_Lem_eq1} and \eqref{EF_Lem_eq2}, it follows that the functions $g^{\pm}_{\Delta}$ have exponential type $2\pi\Delta$ and are integrable on $\mathbb{R}$. By the Paley-Wiener theorem, the Fourier transforms $\widehat{g}^{\pm}_{\Delta}$ are compactly supported on the interval $[-\Delta,\Delta]$. Moreover, using \eqref{EF_Lem_eq1} we obtain
\[
\big|\widehat{g}_{\Delta}^{\pm}(\xi)\big|=\Bigg|\int_{-\infty}^{\infty}g_{\Delta}^{\pm}(x)e^{-2\pi ix\xi}\,\dx\Bigg|\leq \int_{-\infty}^{\infty}\big|g_{\Delta}^{\pm}(x)\big|\,\dx \leq K_{2m+1} \int_{-\infty}^{\infty}\dfrac{1}{1+x^2}\,\dx\ll_m 1.
\]

\subsubsection{Part (iii)} From \eqref{Eq_31_F}, \eqref{Eq_50} and the identities in \cite[Section 11, Corollary 17 and Example 3]{CLV} we obtain
\begin{align*}
\int_{-\infty}^{\infty}& \big\{f_{2m+1}(x)-g^{-}_{2m+1,\Delta}(x)\big\}\,\dx \\
& =\dfrac{1}{\Delta}\int_{-\infty}^{\infty}\big\{F_{\Delta}(x)-G^{-}_{\Delta}(x)\big\}\,\dx \\
& = \dfrac{1}{\Delta}\int_{0}^{\infty}\Bigg\{\displaystyle\sum_{n \neq 0}(-1)^{n+1}\lambda^{-1/2}e^{-\pi\lambda^{-1}n^2}\Bigg\}\,\d\nu_{\Delta}(\lambda) \\
 & = \dfrac{1}{\Delta}\int_{0}^{\infty}\int_{1/2}^{3/2}\Bigg\{\displaystyle\sum_{n \neq 0}(-1)^{n+1}\lambda^{-1/2}e^{-\pi\lambda^{-1}n^2}\Bigg\}\left(\sigma-\tfrac{1}{2}\right)^{2m}\Bigg(\dfrac{e^{-\pi\lambda(\sigma-1/2)^{2} \Delta^2}-e^{-\pi\lambda\Delta^2}}{2\lambda}\Bigg)\,\d\sigma\, \d\lambda\\
 &  = \dfrac{1}{\Delta}\int_{1/2}^{3/2}\left(\sigma-\tfrac{1}{2}\right)^{2m}\int_{0}^{\infty}\Bigg\{\displaystyle\sum_{n \neq 0}(-1)^{n+1}\lambda^{-1/2}e^{-\pi\lambda^{-1}n^2}\Bigg\}\Bigg(\dfrac{e^{-\pi\lambda(\sigma-1/2)^{2} \Delta^2}-e^{-\pi\lambda\Delta^2}}{2\lambda}\Bigg)\,\d\lambda\, \d\sigma\\
 &  = \dfrac{1}{\Delta}\int_{1/2}^{3/2}\left(\sigma-\tfrac{1}{2}\right)^{2m}\log\Bigg(\dfrac{1+e^{-2\pi(\sigma-1/2)\Delta}}{1+e^{-2\pi\Delta}}\Bigg)\,\d\sigma,
\end{align*}
where the interchange of integrals is justified since the integrand is nonnegative. In a similar way, we have
\begin{align*}
 \int_{-\infty}^{\infty} & \big\{g^{+}_{2m+1,\Delta}(x)-f_{2m+1}(x)\big\}\,\dx \\
& =\dfrac{1}{\Delta}\int_{-\infty}^{\infty}\big\{G^{+}_{\Delta}(x)-F_{\Delta}(x)\big\}\,\dx \\
& = \dfrac{1}{\Delta}\int_{0}^{\infty}\Bigg\{\displaystyle\sum_{n \neq 0}\lambda^{-1/2}e^{-\pi\lambda^{-1}n^2}\Bigg\}\,\d\nu_{\Delta}(\lambda) \\
&  = \dfrac{1}{\Delta}\int_{0}^{\infty}\int_{1/2}^{3/2}\Bigg\{\displaystyle\sum_{n \neq 0}\lambda^{-1/2}e^{-\pi\lambda^{-1}n^2}\Bigg\}\left(\sigma-\tfrac{1}{2}\right)^{2m}\Bigg(\dfrac{e^{-\pi\lambda(\sigma-1/2)^{2} \Delta^2}-e^{-\pi\lambda\Delta^2}}{2\lambda}\Bigg)\,\d\sigma\, \d\lambda\\
&  = \dfrac{1}{\Delta}\int_{1/2}^{3/2}\left(\sigma-\tfrac{1}{2}\right)^{2m}\int_{0}^{\infty}\Bigg\{\displaystyle\sum_{n \neq 0}\lambda^{-1/2}e^{-\pi\lambda^{-1}n^2}\Bigg\}\Bigg(\dfrac{e^{-\pi\lambda(\sigma-1/2)^{2} \Delta^2}-e^{-\pi\lambda\Delta^2}}{2\lambda}\Bigg)\,\d\lambda \,\d\sigma\\
& = -\dfrac{1}{\Delta}\int_{1/2}^{3/2}\left(\sigma-\tfrac{1}{2}\right)^{2m}\log\Bigg(\dfrac{1-e^{-2\pi(\sigma-1/2)\Delta}}{1-e^{-2\pi\Delta}}\Bigg)\d\sigma.
\end{align*}
This concludes  the proof of Lemma \ref{lema extremal}.

\section{Extension to general $L$-functions}\label{Sec_L_functions}

\subsection{Statement} In this section we briefly discuss how to extend our results to a general family of $L$-functions in the framework of \cite[Chapter 5]{IK}. Below we adopt the notation
$$\Gamma_\mathbb R(z):=\pi^{-z/2}\,\Gamma\left(\frac{z}{2}\right),$$ 
where $\Gamma$ is the usual Gamma function. We consider a meromorphic function $L(\cdot,\pi)$ on $\mathbb C$ which meets the following requirements (for some positive integer $d$ and some $\vartheta\in[0,1]$). The examples include the Dirichlet $L$-functions $L(\cdot,\chi)$ for primitive characters $\chi$.

\medskip

\noindent(i) There exists a sequence $\{\lambda_\pi(n)\}_{n\ge1}$ of complex numbers ($\lambda_\pi(1) =1$) such that the series $$\sum_{n=1}^\infty\frac{\lambda_\pi(n)}{n^s}$$ converges absolutely to $L(s,\pi)$ on $\{s\in\mathbb C \,;\,\text{Re}\,s>1\}$.

\medskip

\noindent(ii) For each prime number $p$, there exist $\alpha_{1,\pi}(p),\alpha_{2,\pi}(p),\ldots,\alpha_{d,\pi}(p)$ in $\mathbb C$ such that $|\alpha_{j,\pi}(p)|\leq p^\vartheta$, where $0 \leq \vartheta \leq 1$ is independent of $p$, and 
$$L(s,\pi)=\prod_p\prod_{j=1}^d\left(1-\frac{\alpha_{j,\pi}(p)}{p^s}\right)^{-1},$$
with absolute convergence on the half plane $\{s \in \C; \text{Re}\,s>1\}$.

\medskip

\noindent(iii) For some positive integer $N$ and some complex numbers $\mu_1,\mu_2,\ldots,\mu_d$ whose real parts are greater than $-1$ and such that $\{\mu_1,\mu_2,\ldots,\mu_d\}=\{\overline{\mu_1},\overline{\mu_2},\ldots,\overline{\mu_d}\}$, the completed $L$-function 
$$\Lambda(s,\pi):=N^{s/2}\prod_{j=1}^d \Gamma_\mathbb R(s+\mu_j)L(s,\pi)$$
is a meromorphic function of order 1 that has no poles other than $0$ and $1$. The points $0$ and $1$ are poles with the same order $r(\pi)\in\{0,1,\ldots,d\}$. Furthermore, the function $\Lambda(s, \tilde\pi):=\overline{\Lambda(\overline s, \pi)}$ satisfies the functional equation
\begin{equation}\label{Intro_FE}
\Lambda(s,\pi)=\kappa\,\Lambda(1-s,\tilde\pi)
\end{equation}
for some unitary complex number $\kappa$.

\medskip

We define the {\it analytic conductor} of $L(\cdot,\pi)$ as the function 
$$C(t,\pi)=N\prod_{j=1}^d(|it+\mu_j|+3).$$
In what follows we assume the {\it generalized Riemann hypothesis}, which asserts that $\Lambda(s,\pi)\neq 0$ if ${\rm Re}\,s\neq\tfrac12$. For $t>0$, we define here the moments of the argument function as the sequence, for $n\geq 0$,
\begin{equation*}
S_n(t,\pi) = -\frac{1}{\pi} \,\,\im{\left\{\dfrac{i^{n}}{n!}\int_{1/2}^{\infty}{\left(\sigma-\tfrac{1}{2}\right)^{n}\,\frac{L'}{L}(\sigma+it, \pi)}\,\d\sigma\right\}}.
\end{equation*}
Differentiating under the integral sign and using integration by parts, one can see that $S_n'(t,\pi) = S_{n-1}(t,\pi)$ for $t>0$ (in the case $n=1$ we may restrict ourselves to the case when $t$ is not the ordinate of a zero of $L$). 

\medskip

The main result of this section is the following.

\begin{theorem} \label{Thm6}
For $n\geq 0$, let $C_n^{\pm}$ be the constants defined in Theorem \ref{Thm1}. Let $L(\cdot,\pi)$ satisfy the generalized Riemann hypothesis. Then, for all $t>0$ we have
\begin{align*}
-\Big((1 + 2\vartheta)^{n+1} C_n^- + o(1)\Big) \frac{\log C(t,\pi)}{(\logfeio)^{n+1}} \leq S_n(t,\pi)  \leq \Big((1 + 2\vartheta)^{n+1} C_n^+ + o(1)\Big) \frac{\log C(t,\pi)}{(\logfeio)^{n+1}}.
\end{align*}
The terms $o(1)$ above are $O(\log \logfeio / \logfeio)$, where the constant implicit by the $O$-notation may depend on $n$ but does not depend on $d$ or $N$.  
\end{theorem}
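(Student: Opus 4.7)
The plan is to mirror, step by step, the proof of Theorem \ref{Thm1}, substituting each tool by its $L$-function analogue and tracking the new parameter $\vartheta$. The starting point is a representation lemma analogous to Lemma \ref{Rep_lem}: beginning from the integral definition of $S_n(t,\pi)$, integration by parts in $\sigma$ together with the Hadamard factorization of the completed $L$-function $\Lambda(\cdot,\pi)$ and Stirling's formula applied to each Gamma factor should yield, under GRH and for $n=2m+1$,
\begin{equation*}
S_{2m+1}(t,\pi) \;=\; \kappa_{2m+1}\,\log C(t,\pi) \;-\; \frac{(-1)^m}{\pi(2m)!}\sum_{\gamma_\pi} f_{2m+1}(t-\gamma_\pi) \;+\; O(1),
\end{equation*}
and an analogous identity for $n=2m$, where $\gamma_\pi$ runs over the ordinates of the non-trivial zeros of $L(s,\pi)$ and $\kappa_{2m+1}$ is an explicit constant. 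The test-point trick with $\tfrac32+it$ used in the proof of Lemma \ref{Rep_lem} again ensures the required absolute convergence.

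Next, one invokes the standard Guinand--Weil explicit formula for $L(\cdot,\pi)$: the sum over non-trivial zeros $\rho_\pi$ of $h\!\big((\rho_\pi-\tfrac12)/i\big)$ equals an archimedean block (involving $\log N$ and $\sum_j\re(\Gamma_{\mathbb R}'/\Gamma_{\mathbb R})(\tfrac14+\tfrac{iu+\mu_j}{2})$) plus the prime-power sum $-\frac{1}{2\pi}\sum_{n\ge 2}\frac{\Lambda_\pi(n)}{\sqrt n}\{\widehat h(\tfrac{\log n}{2\pi})+\widehat h(\tfrac{-\log n}{2\pi})\}$, where the coefficients of $-L'/L$ satisfy $|\Lambda_\pi(n)|\le d\,\Lambda(n)\,n^{\vartheta}$ by hypothesis (ii). Applied to the extremal functions $g^\pm_{2m+1,\Delta}$ of Lemma \ref{lema extremal}, the archimedean block reproduces the identity \eqref{Ev_int_f_2m+1} for each Gamma factor, and after summation it yields the main contribution $\pm\frac{\zeta(2m+2)}{\pi(2\pi\Delta)^{2m+2}}\log C(t,\pi)$ (with the factor $1-2^{-2m-1}$ appearing for one of the signs) exactly as in \eqref{ASA_eq4}--\eqref{ASA_eq5}. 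The only substantive change is in the prime-power bound \eqref{AsA_eq3}: using $|\Lambda_\pi(n)|\le d\,\Lambda(n)\,n^\vartheta$ one finds
\begin{equation*}
\Bigg|\frac{1}{2\pi}\sum_{n\ge 2}\frac{\Lambda_\pi(n)}{\sqrt n}\,\widehat g^\pm_\Delta\!\left(\tfrac{\pm\log n}{2\pi}\right)\Bigg| \;\ll_{m,d}\; \sum_{n\le e^{2\pi\Delta}}\frac{\Lambda(n)\,n^{\vartheta}}{\sqrt n} \;\ll_{m,d}\; e^{\pi\Delta(1+2\vartheta)}.
\end{equation*}
Optimizing then forces the choice $\pi\Delta(1+2\vartheta)=\log\log C(t,\pi)-(2m+3)\log\log\log C(t,\pi)$, which produces the factor $(1+2\vartheta)^{2m+2}$ in the final constants.

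The case of $n$ even (and $\ge 2$) is handled by the interpolation argument of Section \ref{Int_sec}, which is purely analytic and depends only on the identity $S_n'(t,\pi)=S_{n-1}(t,\pi)$, valid here by differentiating under the integral in the definition of $S_n(t,\pi)$. With the roles of $\log t$ and $\log\log t$ played by $\log C(t,\pi)$ and $\log\log C(t,\pi)$, the mean-value-theorem and two-step optimization of that section run verbatim; starting from odd-indexed bounds carrying the factors $(1+2\vartheta)^{n}$ and $(1+2\vartheta)^{n+2}$, the elementary optimizations combine them multiplicatively to produce $(1+2\vartheta)^{n+1}$, yielding the claimed constant $(1+2\vartheta)^{n+1}C_n^{\pm}$.

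The principal technical obstacle lies in the representation step: identifying the coefficient $\kappa_{2m+1}$ of $\log C(t,\pi)$ \emph{uniformly} in the parameters $d$, $N$, $\{\mu_j\}$. When Stirling's formula is applied to each $\Gamma_{\mathbb R}(s+\mu_j)$, the $\mu_j$-dependent corrections must accumulate cleanly into $\log C(t,\pi)=\log N+\sum_j\log(|it+\mu_j|+3)$, and the archimedean block in the explicit formula must be asymptotically proportional to the same quantity. Once these uniform asymptotics are in hand, the rest of the proof is a mechanical adaptation of Sections 3 and \ref{Int_sec}.
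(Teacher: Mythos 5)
Your plan is structurally the right one and correctly locates the source of the factor $(1+2\vartheta)^{n+1}$ in the prime-power sum and in the combinatorics of the interpolation. However, there is a genuine gap in the representation step, and it is not the one you flagged in your last paragraph. You assert that ``the test-point trick with $\tfrac32+it$ used in the proof of Lemma \ref{Rep_lem} again ensures the required absolute convergence,'' and accordingly you state the representation with the undilated $f_{2m+1}$. This fails: by hypothesis (ii) one only has $|\alpha_{j,\pi}(p)|\le p^{\vartheta}$ with $\vartheta\in[0,1]$, so the Dirichlet series for $L'/L$ and the Euler product converge absolutely only on $\{\re s>1+\vartheta\}$, which in the worst case is $\{\re s>2\}$. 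The bound $|\log|L(s,\pi)||\ll d/2^{\re s}$ that plays the role of the $O(1)$ tail estimate is only available for $\re s\ge 5/2$, and so the test point must be moved from $\tfrac32+it$ to $\tfrac52+it$ and the truncation from $3/2$ to $5/2$. This is exactly what the paper does, and it changes the representation: the function $f_n$ is replaced by the dilation $\widetilde f_n(x)=2^n f_n(x/2)$, and the prefactor of $\log C(t,\pi)$ acquires an extra $2^{2m+1}$. Correspondingly, the extremal functions must be taken as $\widetilde g^{\pm}_{2m+1,\Delta}(z)=2^{2m+1}g^{\pm}_{2m+1,2\Delta}(z/2)$. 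It is a nontrivial (and pleasant) fact that these extra powers of $2$ cancel between the representation lemma and the $L^1$-errors of the dilated extremal functions, so that the final constants $C_n^{\pm}$ are the same as in Theorem \ref{Thm1}; your computation gets the right numerical answer, but via a representation identity that is false for $\vartheta>1/2$.

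Two smaller points. First, the paper achieves implied constants independent of $d$ and $N$ by using $\logfeio$ (note the exponent $3/d$ inside the double logarithm) rather than $\log\log C(t,\pi)$ in both the choice of $\Delta$ and the final expression; your sketch replaces $\log\log t$ by $\log\log C(t,\pi)$, which only yields $d$-dependent implied constants, contrary to the last sentence of the theorem. Second, the paper chooses $\pi\Delta=\max\Big\{\tfrac{\logfeio-(2m+5)\log\logfeio}{1+2\vartheta},\pi\Big\}$, both to ensure $\Delta\ge 1$ (needed in Lemma \ref{lema extremal}) and because the error budget requires the extra power of $\log\logfeio$; your choice of exponent $2m+3$ and omission of the $\max$ would need to be corrected. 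These are minor, but the $3/2$-versus-$5/2$ issue is a step that would genuinely fail as written.
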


The case $n=0$ of this theorem was established in \cite{CCM2} and the case $n=1$ was established in \cite{CF}.

\subsection{Outline of the proof} The proof of Theorem \ref{Thm6} follows the same circle of ideas used to prove Theorem \ref{Thm1}. We only give here a brief account of the proof, indicating the changes that need to be made.  Notice that we only need to prove Theorem \ref{Thm6} for the case $n$ odd, since the case of $n\geq 2$ even follows by reproducing the interpolation argument of Section \ref{Int_sec}.

\medskip

By the product expansion of $L(\cdot,\pi)$ and the inequality $|\alpha_{j,\pi}(p)|\leq p$, 
\begin{equation}\label{L_fun_eq1}
|\log|L(s,\pi)||\leq d\log\zeta({\rm Re}\,s-1)=O\bigg(\frac{d}{2^{{\rm Re}\,s}}\bigg)
\end{equation} 
for any $s$ with ${\rm Re}\,s\geq\frac{5}{2}$. Note also that 
\begin{equation*}
\frac{L'}{L}(s,\pi)=-\sum_p\sum_{j=1}^d\frac{\alpha_{j,\pi}(p)}{p^s}\left(1-\frac{\alpha_{j,\pi}(p)}{p^s}\right)^{-1}\log p\,,
\end{equation*}
where the right-hand side converges absolutely if ${\rm Re}\,s>1$. This shows that the logarithmic derivative of $L(\cdot,\pi)$ has a Dirichlet series
\begin{equation}\label{L_fun_eq2}
\frac{L'}{L}(s,\pi)=-\sum_{n=2}^\infty\frac{\Lambda_\pi(n)}{n^s},
\end{equation}
where $\Lambda_\pi(n)=0$ if $n$ is not a power of prime and $\Lambda_\pi(p^k)=\sum_{j=1}^d\alpha_{j,\pi}(p)^k\log p$ if $p$ is prime and $k$ is a positive integer. If follows that 
\begin{equation}\label{L_fun_eq3}
\big|\Lambda_\pi(n)\big| \leq d \,\Lambda(n) \,n^{\vartheta}.\end{equation}
Let $f_n$ be defined by \eqref{Def_f_2m} - \eqref{Def_f_2m+1} and consider here the dilated functions $$\widetilde{f}_n(x) = 2^n f_n\left(\tfrac{x}{2}\right).$$
The following result is the analogue of Lemma \ref{Rep_lem}.

\begin{lemma}
Let $L(s,\pi)$ satisfy the generalized Riemann hypothesis. For each $n\geq 0$ and $t>0$ $($and $t$ not coinciding with an ordinate of a zero of $L(s,\pi)$ in the case $n=0$$)$ we have:
\begin{itemize}
\item[(i)] If $n = 2m$, for $m \in \Z^+$, then
\begin{equation} \label{L_Lem2_eq_representation_even}
S_{2m}(t,\pi)= \frac{(-1)^{m}}{\pi(2m)!} \,\sum_{\gamma}\widetilde{f}_{2m}(t-\gamma)\,+\,O(d).  
\end{equation}
\item[(ii)] If $n = 2m+1$, for $m \in \Z^+$, then
\begin{equation} \label{L_Lem2_eq_representation_odd}
S_{2m+1}(t,\pi)=\frac{(-1)^{m}\ 2^{2m+1}}{\pi(2m+2)!}\log C(t,\pi)  -\frac{(-1)^{m}}{\pi(2m)!} \sum_{\gamma}\widetilde{f}_{2m+1}(t-\gamma) \,+ \,O(d).
\end{equation}
\end{itemize}
The sums in \eqref{L_Lem2_eq_representation_even} and \eqref{L_Lem2_eq_representation_odd} run over all values $\gamma$ such that $\Lambda(\tfrac12 + i \gamma, \pi) = 0$, counted with multiplicity.
\end{lemma}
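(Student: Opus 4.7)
The proof will follow the two-case template of Lemma~\ref{Rep_lem}, with two essential modifications dictated by the $L$-function setting. First, the test point on the right edge of the critical strip must be shifted from $\tfrac{3}{2}+it$ to $\tfrac{5}{2}+it$, because the bound \eqref{L_fun_eq1} gives $|\log |L(s,\pi)||\ll d\cdot 2^{-\re s}$ only on $\re s\geq \tfrac{5}{2}$, and \eqref{L_fun_eq2}--\eqref{L_fun_eq3} give $\frac{L'}{L}(\tfrac{5}{2}+it,\pi)=O(d)$. Second, every error term that was $O(1)$ in the zeta case becomes $O(d)$, coming from the $d$ gamma factors, the $d$ Stirling asymptotics, and the above bounds. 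The natural $\sigma$-integration window widens to $[\tfrac{1}{2},\tfrac{5}{2}]$, of length $2$; the substitution $\sigma=\tfrac{1}{2}+2u$ compresses it back to $u\in[0,1]$, and via the integral representation \eqref{definition_f_2m+1_2} the argument of $f_n$ naturally becomes $(t-\gamma)/2$ with a prefactor $2^{n}$, reassembling into the dilated function $\widetilde{f}_n(x)=2^n f_n(x/2)$.

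For $n=2m+1$, first reproduce \eqref{expansion_S_m_odd} to obtain
\[
S_{2m+1}(t,\pi)=\frac{(-1)^m}{\pi(2m)!}\int_{1/2}^{5/2}\!\Big(\sigma-\tfrac{1}{2}\Big)^{2m}\log|L(\sigma+it,\pi)|\,\d\sigma+O(d),
\]
the tail $\sigma>\tfrac{5}{2}$ being absorbed by \eqref{L_fun_eq1}. Next form the pole-cleared entire function $\xi_\pi(s):=s^{r(\pi)}(s-1)^{r(\pi)}\Lambda(s,\pi)$, apply Hadamard's factorization, and compare magnitudes at $\sigma+it$ and the test point to get, under the generalized RH,
\[
\left|\frac{\xi_\pi(\sigma+it)}{\xi_\pi(\tfrac{5}{2}+it)}\right|^{2}=\prod_\gamma\frac{(\sigma-\tfrac{1}{2})^2+(t-\gamma)^2}{4+(t-\gamma)^2}.
\]
Apply Stirling's formula to each of the $d$ factors $\Gamma_{\mathbb R}(s+\mu_j)$, fold in $N^{s/2}$, and use $|\log L(\tfrac{5}{2}+it,\pi)|=O(d)$ to produce the $L$-function analog of \eqref{Stirling_1}:
\[
\log|L(\sigma+it,\pi)|=\tfrac{1}{2}\!\Big(\tfrac{5}{2}-\sigma\Big)\log C(t,\pi)-\tfrac{1}{2}\sum_\gamma\log\frac{4+(t-\gamma)^2}{(\sigma-\tfrac{1}{2})^2+(t-\gamma)^2}+O(d),
\]
uniformly on $[\tfrac12,\tfrac52]$. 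Substituting and making the change of variables $u=(\sigma-\tfrac12)/2$, the polynomial-times-linear term evaluates to $\tfrac{2^{2m+1}}{(2m+1)(2m+2)}\log C(t,\pi)$, which after multiplication by $(-1)^m/(\pi(2m)!)$ is precisely the coefficient $\tfrac{(-1)^m\,2^{2m+1}}{\pi(2m+2)!}$ of $\log C(t,\pi)$ in \eqref{L_Lem2_eq_representation_odd}; meanwhile the sum-over-zeros piece contracts via \eqref{definition_f_2m+1_2} to $\sum_\gamma \widetilde{f}_{2m+1}(t-\gamma)$, exactly after absorbing the factor $2^{2m+1}$. The interchange of sum and integral is justified by monotone convergence, as in \eqref{Lem_2_eq_2_formula_S_2m+1}.

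For $n=2m$, mirror Case~2 of Lemma~\ref{Rep_lem}. Start from the partial-fraction expansion
\[
\frac{L'}{L}(s,\pi)=\sum_\rho\!\Big(\frac{1}{s-\rho}+\frac{1}{\rho}\Big)-\frac12\sum_{j=1}^{d}\frac{\Gamma'}{\Gamma}\!\Big(\frac{s+\mu_j}{2}\Big)+O(d),
\]
subtract the test-point value $\frac{L'}{L}(\tfrac52+it,\pi)=O(d)$, take imaginary parts, and integrate $(\sigma-\tfrac12)^{2m}$ times the result on $[\tfrac12,\tfrac52]$. Evaluate the $\sigma$-integral in closed form exactly as in \eqref{test_point_2}; the rescaling $\sigma-\tfrac12=2u$ then converts the sum-over-zeros expression into $\sum_\gamma \widetilde{f}_{2m}(t-\gamma)$ with the claimed prefactor, yielding \eqref{L_Lem2_eq_representation_even}. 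As in the zeta case no $\log C(t,\pi)$ term arises because the kernel in $\sigma$ is now odd about the mid-point of the integration window.

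The main obstacle is careful bookkeeping to ensure the cumulative error is genuinely $O(d)$. Each of the $d$ Stirling asymptotics contributes $O(1)$; the pole-clearing factor $s^{r(\pi)}(s-1)^{r(\pi)}$ with $r(\pi)\leq d$ perturbs the Hadamard identity only by $O(d/t^{2})$; the tail $\int_{5/2}^{\infty}$ is $O(d)$ by \eqref{L_fun_eq1}; and the partial-fraction residual in the even case inherits the same linear dependence on $d$. This linear-in-$d$ control is exactly what the subsequent explicit-formula argument (an analog of Section~3 with $\log t$ replaced by $\log C(t,\pi)$ throughout) requires to deliver the bounds in Theorem~\ref{Thm6}, where the constants $C_n^{\pm}$ are multiplied by $(1+2\vartheta)^{n+1}$.
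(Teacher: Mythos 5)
Your proposal is correct and follows precisely the route the paper indicates: shift the test point to $\tfrac52+it$, truncate the $\sigma$-integral at $\tfrac52$, use the $L$-function analogues of the Hadamard factorization and partial-fraction decomposition, and observe that the length-$2$ window after rescaling $\sigma-\tfrac12=2u$ recovers the dilated functions $\widetilde f_n(x)=2^nf_n(x/2)$ with all errors linear in $d$. The paper's own proof of this lemma is only a one-paragraph sketch deferring the bookkeeping to \cite[Section 4.2]{CCM2} and \cite[Lemma 4]{CF}; your write-up carries out that bookkeeping explicitly, and the coefficient computations you display (the polynomial integral evaluating to $2^{2m+1}/((2m+1)(2m+2))$ and the zero-sum contracting via \eqref{definition_f_2m+1_2}) all check out.
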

\begin{proof} 
This follows the outline of the proof of Lemma \ref{lema extremal}, truncating the integrals \eqref{expansion_S_m_odd} and \eqref{Lem_2_case_2_S_2m} in the point $5/2$ instead of $3/2$, and introducing the test point $5/2 + it$ instead of $3/2 + it$ in \eqref{test_point_1} and \eqref{test_point_2}. This is due to \eqref{L_fun_eq1}, \eqref{L_fun_eq2} and \eqref{L_fun_eq3}, in order to better deal with the absolute convergence issues, and ultimately causes the replacement of $f_n$ by the dilated version $\widetilde{f}_n$. The Hadamard factorization \eqref{Had_fact} and the partial fraction decomposition \eqref{cota} should be replaced by their $L$-function analogues \cite[Theorem 5.6 and Proposition 5.7]{IK} and \cite[Equation 5.24]{IK}, respectively. Full details are given in \cite[Section 4.2]{CCM2} for $n=0$ and in \cite[Lemma 4]{CF} for $n=1$.
\end{proof}
The explicit formula for $L$-functions takes the following form.
\begin{lemma}[Explicit formula for $L$-functions]\label{Exp_for_L}
Let $h(s)$ be analytic in the strip $|\im{s}|\leq \tfrac12+\varepsilon$ for some $\varepsilon>0$, and assume that $|h(s)|\ll(1+|s|)^{-(1+\delta)}$ for some $\delta>0$ when $|\re{s}|\to\infty$. Then 
\begin{align*}
\begin{split}
\sum_{\rho} h\left(\frac{\rho - \tfrac12}{i}\right)&= r(\pi)\left\{h\left(\frac{1}{2i}\right)+h\left(-\frac{1}{2i}\right)\right\} + \frac{\log N}{2\pi}\int_{-\infty}^\infty h(u)\dd u\\
& +\frac{1}{\pi}\sum_{j=1}^d\int_{-\infty}^\infty h(u)\,{\rm Re}\,\frac{\Gamma_\mathbb R'}{\Gamma_\mathbb R}\left(\hh+\mu_j+iu\right)\d u\\
& -\frac{1}{2\pi}\sum_{n=2}^\infty\frac{1}{\sqrt{n}}\left\{\Lambda_{\pi}(n)\, \widehat h\left(\frac{\log n}{2\pi}\right)+\overline{\Lambda_{\pi}(n)}\, \widehat h\left(\frac{-\log n}{2\pi}\right)\right\} \\
&- \!\!\sum_{-1 < {\rm Re}\,\mu_j<-\meio}\!\!\left\{h\left(\frac{-\mu_j-\meio}{i}\right)\!+h\left(\frac{\mu_j+\meio}{i}\right)\!\right\} -\meio\sum_{{\rm Re}\,\mu_j=-\meio} \left\{h\left(\frac{-\mu_j-\meio}{i}\right)\!+h\left(\frac{\mu_j+\meio}{i}\right)\!\right\},
\end{split}
\end{align*}
where the sum runs over all zeros $\rho$ of $\Lambda(\cdot, \pi)$ and the coefficients $\Lambda_\pi(n)$ are defined by \eqref{L_fun_eq2}.
\end{lemma}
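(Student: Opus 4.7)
The strategy is the classical Riemann--Weil contour integration argument, generalizing \cite[Theorem 5.12]{IK}. I would begin by integrating $h\bigl((s-\tfrac12)/i\bigr)\,\frac{\Lambda'}{\Lambda}(s,\pi)$ over a rectangular contour with vertical sides at $\Rep s = c$ and $\Rep s = 1-c$ (where $c$ is fixed slightly larger than $1+\vartheta$, so that the Dirichlet series for $L'/L(s,\pi)$ converges absolutely on the right-hand line) and horizontal sides at $\im s = \pm T$. Inside this rectangle, by hypothesis (iii), the integrand has simple poles exactly at the zeros $\rho$ of $\Lambda(\cdot,\pi)$, each contributing $h((\rho-\tfrac12)/i)$ counted with multiplicity, and simple poles of residue $-r(\pi)$ at $s=1$ and $s=0$. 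Combining standard bounds for $\Lambda'/\Lambda$ on horizontal strips (\cite[Theorem 5.6 \& Proposition 5.7]{IK}) with the decay $|h(s)|\ll(1+|s|)^{-(1+\delta)}$, the horizontal segments contribute negligibly as $T\to\infty$, yielding
\begin{equation*}
\sum_{\rho} h\!\left(\tfrac{\rho-1/2}{i}\right) - r(\pi)\!\left\{h(\tfrac{1}{2i})+h(-\tfrac{1}{2i})\right\} = I_c - I_{1-c},
\end{equation*}
where $I_a := \frac{1}{2\pi i}\int_{(a)} h((s-\tfrac12)/i)\,\frac{\Lambda'}{\Lambda}(s,\pi)\,\d s$.

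To evaluate these two integrals I would expand $\frac{\Lambda'}{\Lambda}(s,\pi) = \tfrac12\log N + \sum_{j=1}^{d}\frac{\Gamma_{\R}'}{\Gamma_{\R}}(s+\mu_j) + \frac{L'}{L}(s,\pi)$, and for $I_{1-c}$ apply the change of variables $s\mapsto 1-s$ together with the functional equation $\frac{\Lambda'}{\Lambda}(s,\pi)=-\frac{\Lambda'}{\Lambda}(1-s,\tilde\pi)$, so that $-I_{1-c}=\frac{1}{2\pi i}\int_{(c)} h(-(w-\tfrac12)/i)\,\frac{\Lambda'}{\Lambda}(w,\tilde\pi)\,\d w$. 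The archimedean factors of $\tilde\pi$ involve $\overline{\mu_j}$, but by hypothesis the multiset $\{\overline{\mu_j}\}$ coincides with $\{\mu_j\}$, which is what allows the matching of terms below.

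I would then treat each of the three pieces of $\Lambda'/\Lambda$ separately. For the constant $\tfrac12\log N$, shifting both vertical integrals to the real line (no poles are crossed) and summing yields $\frac{\log N}{2\pi}\int_{\R} h(u)\,\d u$. For the Dirichlet series $L'/L$, I would interchange summation and contour integral (justified by absolute convergence for $\Rep s > 1+\vartheta$ and the rapid decay of $h$) and recognize each term $\int h(u)e^{\mp iu\log n}\,\d u$ as a value $\widehat h(\pm\log n/(2\pi))$; the reflected $\tilde\pi$-integral furnishes the conjugate $\overline{\Lambda_\pi(n)}$. For each gamma factor I would shift the contour from $(c)$ down to the central line $(\tfrac12)$; there $(s-\tfrac12)/i = u\in\R$, and combining the $\pi$-integral with the reflected $\tilde\pi$-integral via the identity $\frac{\Gamma_{\R}'}{\Gamma_{\R}}(\tfrac12+\overline{\mu_j}-iu)=\overline{\frac{\Gamma_{\R}'}{\Gamma_{\R}}(\tfrac12+\mu_j+iu)}$ produces exactly $\frac{1}{\pi}\sum_j\int h(u)\,\Rep\frac{\Gamma_{\R}'}{\Gamma_{\R}}(\tfrac12+\mu_j+iu)\,\d u$.

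The final extra terms in the statement arise from the poles of $\frac{\Gamma_{\R}'}{\Gamma_{\R}}(s+\mu_j)$ at $s=-\mu_j$ (each of residue $-1$) that are crossed during this last contour shift. Using $\Rep\mu_j > -1$, such a pole lies in the open strip $\tfrac12<\Rep s<c$ precisely when $-1<\Rep\mu_j<-\tfrac12$, contributing $-h((-\mu_j-\tfrac12)/i)$ from $I_c$ and, symmetrically, $-h((\mu_j+\tfrac12)/i)$ from $-I_{1-c}$. Poles with $\Rep\mu_j=-\tfrac12$ sit on the contour itself, and a small indentation (equivalently, the principal value) supplies the factor $\tfrac12$ appearing in the statement. \emph{The main obstacle I expect} is the careful handling of these contour shifts when the gamma poles approach or lie on $\Rep s=\tfrac12$, together with the rigorous quantitative bounds on $\Lambda'/\Lambda$ along horizontal segments needed to discard the top and bottom of the rectangle; beyond these points, the argument is a lengthy but direct bookkeeping exercise in matching residues and pairing the $\pi$- and $\tilde\pi$-contributions.
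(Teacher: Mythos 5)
Your proposal follows essentially the same route as the paper, which itself only sketches this lemma as a modification of \cite[Theorem 5.12]{IK}: integrate $h\big((s-\tfrac12)/i\big)\,\tfrac{\Lambda'}{\Lambda}(s,\pi)$ over a tall rectangle, collect the residues at the zeros and at the order-$r(\pi)$ poles $s=0,1$, reflect the left edge onto the right via the functional equation, split $\Lambda'/\Lambda$ into conductor, archimedean and Dirichlet-series pieces, and pick up the poles of the $\Gamma_{\mathbb R}$-factors (with the principal-value factor $\tfrac12$ when ${\rm Re}\,\mu_j=-\tfrac12$) when shifting to the critical line. Your residue bookkeeping and the pairing of the $\pi$- and $\tilde\pi$-contributions are correct.

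One concrete point needs repair: the placement of the vertical edges. You take ${\rm Re}\,s=c$ with $c>1+\vartheta$ so that the Dirichlet series for $L'/L$ converges absolutely on that line. But $h$ is only assumed analytic in $|\im s|\le\tfrac12+\varepsilon$, and since $\im\big((s-\tfrac12)/i\big)=\tfrac12-{\rm Re}\,s$, the integrand $h\big((s-\tfrac12)/i\big)$ is only known to be defined for $-\varepsilon\le{\rm Re}\,s\le 1+\varepsilon$. As $\vartheta$ may be as large as $1$ while $\varepsilon$ is arbitrarily small, your contour can leave the domain of $h$. The paper's sketch instead takes the edges at ${\rm Re}\,s=1+\varepsilon/2$ and ${\rm Re}\,s=-\varepsilon/2$; the term-by-term integration of the Dirichlet series must then be justified not by absolute convergence of $\sum_n\Lambda_\pi(n)n^{-s}$ on that line, but rather by the decay $\widehat h(\xi)\ll e^{-2\pi(\frac12+\varepsilon)|\xi|}$ that the analyticity of $h$ in the strip provides (or, in the application at hand, simply by the compact support of $\widehat h$, which truncates the prime sum). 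Apart from this, and from spelling out that $T_1,T_2\to\infty$ must be taken along heights kept away from the zeros (using the $O(\log C(T,\pi))$ count of zeros per unit interval to control the horizontal edges), your argument matches the intended proof.
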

\begin{proof}
This is a modification of the proof of \cite[Theorem 5.12]{IK}. The idea is to consider the integral
$$\frac{1}{2\pi i } \oint h\!\left(\frac{s- \tfrac12}{i}\right) \frac{\Lambda'(s,\pi)}{\Lambda(s, \pi)}\,\ds$$
over the rectangular contour connecting the points $1 + \eta + iT_1, - \eta + iT_1, - \eta - iT_2, 1 + \eta - iT_2$, say with $\eta = \varepsilon/2$. Then one sends $T_1, T_2 \to \infty$ over an appropriate sequence of heights that keep the zeros as far as possible (recall that at height $T$, we have $O(\log C(t,\pi))$ zeros, see \cite[Proposition 5.7]{IK}). One then uses the functional equation \eqref{Intro_FE} to replace the integral over the line $\re s = -\eta$ by an integral over the line $\re s = 1 + \eta$, and finally one moves the remaining integrals to the line $\re s = \hh$, picking up possibly some additional poles at the $\mu_j$'s.
\end{proof}
\subsubsection{Conclusion of the proof} For $n = 2m+1$ we have the extremal majorants and minorants of exponential type $\Delta$ for $\widetilde{f}_{2m+1}$ given by Lemma \ref{lema extremal} . These are
$$\widetilde{g}^+_{2m+1, \Delta}(z) : = 2^{2m+1} g^{+}_{2m+1, 2\Delta}(z/2) \ \ \ {\rm and} \ \ \ \widetilde{g}^-_{2m+1, \Delta}(z) : = 2^{2m+1} g^{-}_{2m+1, 2\Delta}(z/2).$$
We now replace $\widetilde{f}_{2m+1}$ in \eqref{L_Lem2_eq_representation_odd} and evaluate using the explicit formula. Let us consider, for instance, the upper bound in the case where $m$ is odd. Letting $h(z):= \widetilde{g}^+_{2m+1, \Delta}(t-z)$ we have
\begin{align}\label{Final_eq1}
S_{2m+1}(t,\pi)\leq -\frac{ 2^{2m+1}}{\pi(2m+2)!}\log C(t,\pi)  +\frac{1}{\pi(2m)!} \sum_{\gamma}h(\gamma) \,+ \,O(d).
\end{align}
We evaluate $\sum_{\gamma}h(\gamma)$ from the explicit formula (Lemma \ref{Exp_for_L}). From Lemma \ref{lema extremal} we have
\begin{align}\label{Final_eq2}
\begin{split}
&\Big|r(\pi)\left\{h\left(\tfrac{1}{2i}\right) +h\left(-\tfrac{1}{2i}\right)\right\}\Big| \\
&\ \ \ \ \ \ \ \ \  + \Bigg|\sum_{-1 < {\rm Re}\,\mu_j<-\meio}\!\!\left\{h\left(\tfrac{-\mu_j-\meio}{i}\right)\!+h\left(\tfrac{\mu_j+\meio}{i}\right)\!\right\} +\meio\sum_{{\rm Re}\,\mu_j=-\meio} \left\{h\left(\tfrac{-\mu_j-\meio}{i}\right)\!+h\left(\tfrac{\mu_j+\meio}{i}\right)\!\right\}\Bigg| \\
 & \ll_m d\,\Delta^2\,e^{\pi\Delta}.
 \end{split}
 \end{align}
Using Striling's formula in the form
\begin{equation*}
\frac{\Gamma_\mathbb R'}{\Gamma_\mathbb R}(z)=\frac{1}{2}\log(2+z)-\frac{1}{z}+O(1),
\end{equation*} 
valid for ${\rm Re}\,z>-\meio$, we find that
\begin{align}\label{Final_eq3}
\frac{\log N}{2\pi}\int_{-\infty}^\infty h(u)\dd u & +\frac{1}{\pi}\sum_{j=1}^d\int_{-\infty}^\infty h(u)\,{\rm Re}\,\frac{\Gamma_\mathbb R'}{\Gamma_\mathbb R}\left(\hh+\mu_j+iu\right)\d u = \frac{\log C(t,\pi)}{2\pi}\int_{-\infty}^\infty h(u)\dd u + O(d).
\end{align}
By Lemma \ref{lema extremal}, the Fourier transform $\widehat h(\xi)$ is supported on $[-\Delta,\Delta]$ and is uniformly bounded. Also, $|\Lambda_\pi(n)|\le d\,\Lambda(n)\,n^{\vartheta}$, and therefore
\begin{align}\label{Final_eq4}
\frac{1}{2\pi}\sum_{n=2}^\infty\frac{1}{\sqrt{n}}\!\left\{\Lambda_{\pi}(n)\,\widehat h\left(\frac{\log n}{2\pi}\right)\!+\!\overline{\Lambda_{\pi}(n)}\,\widehat h\left(\frac{-\log n}{2\pi}\right)\right\} \!= O\left(\!d\sum_{n\leq e^{2\pi\Delta}}\Lambda(n)n^{\vartheta-\meio}\!\right) = O\left(d\,e^{(1+2\vartheta)\pi\Delta}\right),
\end{align}
where the last equality follows by the Prime Number Theorem and summation by parts.

\medskip

From the computations in \eqref{Final_eq0} and \eqref{ASA_eq4}, together with \eqref{Final_eq1}, \eqref{Final_eq2}, \eqref{Final_eq3} and \eqref{Final_eq4} we get
\begin{align*}
S_{2m+1}(t,\pi)\leq \frac{C_{2m+1}^+}{(\pi \Delta)^{2m+2}}  \log C(t,\pi) + O\Big(e^{-2\pi \Delta} \log C(t,\pi) \Big) +  O\left(d\,\Delta^2\, e^{(1+2\vartheta)\pi\Delta}\right).
\end{align*}
for any $t>0$ and any $\Delta \geq 1$. Choosing 
$$\pi \Delta = \max\left\{\frac{\logfeio - (2m+5)\log \logfeio}{(1+2\vartheta)} \ , \ \pi\right\}$$
yields the desired result. 

\medskip

The lower bound for $m$ odd is analogous, using the minorant $\widetilde{g}^-_{2m+1, \Delta}$. The upper and lower bounds for $m$ even are also analogous, changing the roles of $\widetilde{g}^+_{2m+1, \Delta}$ and $\widetilde{g}^-_{2m+1, \Delta}$. 

\medskip

We refer the interested reader to \cite{CF}, where full details are given for the case $n=1$. 

\section*{Acknowledgements}
E.C. acknowledges support from CNPq-Brazil grants $305612/2014-0$ and $477218/2013-0$, and FAPERJ grant $E-26/103.010/2012$. A.C. acknowledges support from CNPq-Brazil. We would like to thank Micah Milinovich and Vorrapan Chandee for the insightful conversations on the topic.

\end{document}